\newtheorem{thm}{Theorem}[section]
\newtheorem{defi}[thm]{Definition}
\newtheorem{lem}[thm]{Lemma}
\newtheorem{lemma}[thm]{Lemma}
\newtheorem{prop}[thm]{Proposition}
\newtheorem{conclusion}[thm]{Corollary}
\newtheorem{problem}[thm]{Problem}
\newtheorem{Question}[thm]{Question}
\newcommand{\ur}{{\mbox{ur}}}
\newcommand{\TT}{{\mathcal{T}}}
\newcommand{\rational}{\mathbb{Q}}
\newcommand{\cant}{2^{<\om}}
\newcommand{\wh}{\widehat}
\newcommand{\sub}{\subseteq}
\newcommand{\om}{\omega}
\newcommand{\eps}{\varepsilon}
\newcommand{\Borel}{\mathrm{Borel}}
\newcommand{\alg}{\mathfrak{A}}
\newcommand{\algb}{\mathfrak{B}}
\newcommand{\algc}{\mathfrak{C}}
\newcommand{\FF}{\mathcal{F}}
\newcommand{\II}{\mathcal{I}}
\newcommand{\JJ}{\mathcal{J}}
\newcommand{\PP}{\mathcal{P}}
\newcommand{\VV}{\mathcal{V}}
\newcommand{\BB}{\mathcal{B}}
\newcommand{\AAA}{\mathcal{A}}
\newcommand{\DD}{\mathcal{D}}
\newcommand{\TTT}{\mathcal{T}}
\begin{document}

\date{}
\title{On the isomorphism problem for measures on Boolean algebras}
\subjclass[2000]{28A60, 03E15}
\keywords{Borel equivalence relations, Boolean algebras, Maharam
theorem, separable measures, uniformly regular measures}
\thanks{Piotr Borodulin--Nadzieja thanks European Science
Foundation for their support through the grant
2499 within the INFTY program.
Mirna D\v{z}amonja thanks EPSRC for their support through the grants EP/G068720 and 
EP/I00498X/1.}

\author{Piotr Borodulin--Nadzieja}
\address{Instytut Matematyczny, Uniwersytet Wroc\l awski, pl. Grunwaldzki 2/4, 50-384 Wroclaw, Poland} 
\email{pborod@math.uni.wroc.pl}
\author{Mirna D\v{z}amonja}
\address{School of Mathematics, University of East Anglia, Norwich NR4 7TJ, UK} 
\email{h020@uea.ac.uk}

\maketitle

\begin{abstract}
The paper investigates possible generalisations of Maharam's theorem to a classification of
Boolean algebras that support a finitely additive measure.
We prove that Boolean algebras that support a finitely additive non-atomic uniformly regular measure
are metrically isomorphic to a subalgebras of the Jordan algebra with the Lebesgue measure.
We give some partial analogues to be used for a classification of algebras that support 
a finitely additive non-atomic measure with a higher uniform regularity number.
We show that some naturally induced equivalence relations connected to metric isomorphism are quite 
complex even on the Cantor algebra and therefore probably we cannot hope for a nice general classification 
theorem for finitely--additive measures.

We present
an example of a Boolean algebra which supports
only separable measures but no uniformly regular
one.
\end{abstract}

\section{Introduction}\label{aims}

A celebrated result in measure theory is the theorem of Maharam \cite{Maharam} which states that if 
$\mu$ is a homogeneous $\sigma$-additive measure on a $\sigma$-complete Boolean algebra 
$\mathfrak B$, then the measure algebra of $(\mathfrak B,\mu)$ is isomorphic to the measure algebra of some $2^\kappa$ with 
the natural product measure. Moreover, every measure algebra can be decomposed into a countable sum of such algebras where the measure is homogeneous. This provides us with a very beautiful classification of $\sigma$-complete measure algebras, hence it is natural to ask if a similar characterisation can be obtained under weaker assumptions. In particular, a natural class to consider is formed by pairs 
$(\mathfrak B,\mu)$ where $\mathfrak B$ is any Boolean algebra, not necessarily $\sigma$-complete,
and $\mu$ is a {\em strictly positive finitely additive} measure on $\mathfrak B$, where $\mu$ without loss of generality assigns measure 1 to the unit element of $\mathfrak B$. In her commentaries to the 1981 edition of the ``Scottish book"
D. Maharam posed this question.

A closely connected, but not the same problem, is that of obtaining a combinatorial characterisation of
Boolean algebras $\mathfrak B$ which {\em support a measure}, that is, for which there is a 
finitely additive $\mu$ which is strictly positive on $\mathfrak B$. It is easy to see that such an algebra must satisfy the countable chain condition ccc and the question of the sufficiency of this condition was raised
by Tarski in \cite{Tarski}. Horn and Tarski in \cite{HornTarski} suggested various other chain conditions
and Gaifman in \cite{Gaifman} showed that in fact a rather strong condition of being a union 
$\mathfrak B=\bigcup_{n<\omega}\mathfrak \FF_n$ where each $\FF_{n+1}$ is $n$-linked, does not suffice 
for $\mathfrak B$ to support a measure. Kelley in \cite{Kelley} gave an exact combinatorial characterisation
of Boolean algebras that support a measure, which we therefore call Kelley algebras.
This characterisation is that $\mathfrak B\setminus \{0\}=\bigcup_{n<\omega}\mathfrak \FF_n$, where each $\FF_n$ has a positive {\em Kelley intersection number}.
The Kelley intersection number for a family $\FF$ of sets is said to be $\ge\alpha$
if for every $n<\omega$ and every sequence of $n$ elements of $\FF$ (possibly
with repetitions), there is a subsequence of length
at least $\alpha\cdot n$ which has a nonempty intersection. 
Then the Kelley intersection number is the sup of all $\alpha$
such that the intersection number of is $\ge\alpha$. This characterisation is unfortunately not very useful in practice, as it is hard to check, but nevertheless, it sheds light on our initial problem of classification. Namely, it shows that every $\sigma$-centred Boolean algebra does support a measure.\footnote{This can also be seen using the Stonre representation theorem.} The
$\sigma$-centred Boolean algebras are exactly the subalgebras of $\PP(\omega)$, which for various good reasons are considered to be unclassifiable.

This detour shows that if we hope to have a classification of Kelley algebras, we should better first restrict to some reasonable subclass. Maharam's theorem suggests that there should be some cardinal invariant
at least as a first dividing line, so the natural first reduction is to consider only those Boolean algebras that support a {\em separable} measure. This can be easily defined by noticing that a strictly positive
measure $\mu$ on a Boolean algebra $\mathfrak B$ induces a metric $d$ given by 
$d(a,b)=\mu(a\bigtriangleup b)$. This gives rise to the cardinal characteristics given by the density character of this metric space, which is exactly the Maharam type of a measure algebra if the algebra is $\sigma$-closed and the measure $\sigma$-additive and homogeneous. The measure $\mu$ is said to be {\em separable} if the density character defined above is equal to $\omega$. The question of a combinatorial characterisation of Boolean algebras that support a separable strictly positive measure has already been considered by many authors, including Talagrand in \cite{Talagrand}, who proposed a plausible candidate characterisation and showed that even so it can only be true consistently. In \cite{DzP} D\v zamonja and Plebanek have shown that there is a ZFC counterexample to this characterisation, therefore putting the characterisation programme back to zero. 

Going back to the fact that all subalgebras of $\PP(\omega)$ are Kelley algebras, we note that
it is rather easy to construct atomic separable measures: counting measures. Many subalgebras of 
$\PP(\omega)$ only support such a measure, so it is more natural to restrict our attention to the non-atomic case. For a Boolean algebra to support a non-atomic measure it is of course necessary that the algebra itself be non-atomic, so we shall mostly consider such algebras.
In \cite{DzP} it is shown that Martin's axiom (for cardinals $<\mathfrak c$) implies that every non-atomic Boolean algebra of size less than $\mathfrak c$ supports a non-atomic separable measure, which shows 
on the one hand that the characterisation of algebras supporting a non-atomic separable measure
is really about algebras of size $\mathfrak c$ (it is easy to see that such a measure cannot exists on an algebra of size $>\mathfrak c$), and on the other hand that a classification is difficult since it potentially includes all small enough non-atomic algebras.

Effros' paper \cite{Effros} enunciated and  Harrington--Kechris--Louveau paper \cite{HaKeLo} initiated a programme
of a classification of equivalence relations on Polish spaces in order to use them as a measuring device for the classification difficulty of various problems naturally arising in mathematics.  Namely, suppose that 
we wish to classify the objects in a certain class, say we are in a Polish space and we wish to classify definable subspaces of it according  to some 
equivalence relation. The equivalence relation may be understood as stating that any two equivalent objects have the same invariant. If this classification is useful, then the invariant should be definable and checking if two objects are in the same class should be doable in a definable way. If we show that such a definable classification is not possible, then we have shown that the class we started is unclassifiable in reasonable terms. Since the beginning
of this programme there has emerged a powerful machinery which has been successfully used to show the complexity of various classification problems. 
In this paper our thesis is that the class of Boolean algebras supporting a separable measure is difficult to classify.
To support it we would ideally like to use the descriptive set theoretic machinery mentioned above. However, we cannot approach this problem directly using the classification techniques of equivalence classes in Polish spaces,
since the Boolean algebras in question cannot be coded as elements of a Polish space. On the other hand, since we are after a non-classification result, we may restrict to a subclass of our initial class, which may be seen as
coming from a Polish space and show a non-classification of even that smaller class. This is exactly what we do, namely we simply consider the measures on the Cantor algebra, and we show that even in the class there the classification of measures up
to isomorphism is basically Borel complete.

The above result shows that we cannot hope for a good classification theorem for finitely--additive measures. However, in the rest
of the paper we argue that some partial classifications can be done. We cannot introduce a good invariant for classification but we can describe 
Boolean algebras with measures as subalgebras of some benchmark Boolean algebras satisfying some additional conditions. 
Restating the definition from the above, a measure $\mu$ on $\BB$ is separable iff there is a countable
$\alg\subseteq \mathfrak B$ which is {\em $\mu$-dense} in the sense that for every $b\in \BB$ and
$\varepsilon >0$ there is $a\in \alg$ such that $\mu(a\bigtriangleup b)<\varepsilon$. Let us say that 
$\alg\subseteq \mathfrak B$ is {\em $\mu$-uniformly dense} if for every $b\in \BB$ and
$\varepsilon >0$ there is $a\in \alg$ such that $a\le b$ and $\mu(b\setminus a)<\varepsilon$.
The {\em uniform density} of $(\mathfrak B, \mu)$ can be defined as the smallest cardinal 
$\kappa$ such that there is a $\mu$-uniformly dense $\alg\subseteq \mathfrak B$ of size $\kappa$.
Measures whose uniform density is $\aleph_0$ are called {\em uniformly regular} and they have
been considered in the literature, for example in \cite{Mercourakis}, where it was shown that
uniform regularity is quite different than separability. We show that in fact for the purposes of characterisation, uniformly regular measures are much superior to the separable ones, since our
Theorem \ref{jordan} gives that a Boolean algebra supports a non-atomic uniformly regular measure iff it is a subalgebra of the so called Jordan algebra, an algebra which is well known in the literature (see e.g. \cite{Jordan}, \cite{Peano} ). The Jordan algebra can be defined as a maximal 
subalgebra of the Cohen algebra, on which the Lebesgue
measure is $\sigma$--additive (on its domain). The Jordan algebra is a classical object that was introduced in the process of constructing the Jordan measure, in fact much before the Lebesgue measure. We prove that all non--atomic measures supported
by the Jordan algebra are mutually isomorphic. Thus, 
we obtain something like Maharam theorem for
uniformly regular measures: every uniformly
regular non--atomic strictly positive measure
defined on a ``maximal possible'' Boolean algebra is
isomorphic to a Lebesgue measure on the Jordan
algebra. 
[We recall in \S\ref{uniform regularity} that it is also known that algebras supporting a non-atomic separable measure are subalgebras of a fixed algebra (namely the Random one) but that there is no iff characterisation known.]
Our results suggest that the classification of non-atomic Kelley algebras should proceed with the invariant being the uniform density rather than the Maharam type. Another conclusion is that although measures on the Cantor algebra
form a complicated structure with respect to
isomorphism, they all have the same (up
isomorphism) extension to a measure 
the Jordan algebra. 

Moving on to a more general case, one should recall that in fact the separable case of Maharam's theorem was known earlier (it was used for example
in classification theorems for commutative von Neumann algebras of operators on separable Hilbert spaces)
so one can see that the countable=separable case is the first building block of the classification. We would like to use our results on the classification of uniformly regular measures as a building block for the full classification accordying to the cardinal invariant of uniform regularity.
It would be our goal to have a full analogue of Maharam's theorem. For the moment we only have partial results about the higher-dimensional case. We present them in Section \ref{higher}.

The above considerations naturally raise the question of the identity of the classes that we are considering: for example is it possible that in fact the Boolean algebras that support a uniformly regular measure are exactly those that support a separable
one? It follows from Theorem \ref{jordan}
that the Cohen algebra does not support a non-atomic uniformly non-regular measure, while it easily follows from the Kelley criterion and the fact that the Cantor algebra is dense in the Cohen algebra that the Cohen algebra does support a separable measure.
We go further and in Theorem \ref{bellike} discover a Kelley algebra which only supports separable measures
but not a uniformly regular one. 

\medskip

\textbf{Acknowledgements.} We thank Su Gao, Alain Louveau, Grzegorz Plebanek and S\l awek Solecki for useful and interesting conversations on various parts of this paper.  

\section{Basic definitions} \label{definitions}
Throughout, $\kappa$ stands for an infinite cardinal.
By a {\em measure} we understand \emph{a finitely
additive} non-negative measure. If we say that a measure $\mu$
is $\sigma$--additive on $\algb$ we do not
necessarily assume that $\algb$ is
$\sigma$--complete. We will consider only totally finite measures, so without loss of generality we only
work with probability measures. A measure on
a Boolean algebra is \emph{non--atomic} if for
every $\varepsilon>0$ there is a finite partition of
unity into elements of measures at most
$\varepsilon$.

We will denote elements of a Boolean algebra by small letters, the unity by
{\bf 1} and zero by {\bf 0}, unless we work with an algebra of sets. 

The \emph{Cantor algebra} $\alg_c$ is the only
Boolean algebra, with respect to isomorphism, which is countable and atomless.
By saying that a Boolean algebra \emph{has a Cantor subalgebra} we understand that
it has a subalgebra isomorphic to $\alg_c$.
It will be convenient to see $\alg_c$ as the Boolean algebra of
clopen subsets of $2^\om$ with the product
topology. Define \emph{the generating tree} $\TTT$
as \[ \TTT = \{[s]\colon s\in \cant\} \] and notice
that $\alg_c$ is generated by $\TTT$. We will often identify $\TTT$ with $\cant$.

The \emph{Cohen algebra} $\algc$ is the
completion of $\alg_c$. In the above setting 
$\algc$ can be seen as the algebra
${\rm Borel}(2^\om)/{\rm Meager}$. \emph{The Lebesgue measure} 
for us is the measure $\lambda$ on $\alg_c$ defined
on $\TTT$ by $\lambda([s]) =1/2^n$ if $s\in 2^n$ and
then extended to $\alg_c$ and further to $\Borel(2^\om)$. 
The \emph{Random algebra} $\mathfrak{R}$ is the Boolean algebra
${\rm Borel}(2^\om)/\{B\sub 2^\om \colon \lambda(B)=0\}$.

We will say that a Boolean algebra \emph{carries} a
measure $\mu$ if $\mu$ is defined on the whole of $\alg$. If,
additionally, $\mu$ is strictly positive on
$\alg$, we will say that $\alg$ \emph{supports} $\mu$.
By a \emph{Kelley algebra} we mean a Boolean algebra
supporting a measure.

By ${\rm Free}(\kappa)$ we denote the free algebra on $\kappa$ generators.
A Boolean algebra is \emph{big} if it has a subalgebra isomorphic to ${\rm Free}(\om_1)$. 
Boolean algebras which are not big are \emph{small}. The Cantor algebra $\alg_c$ is isomorphic to ${\rm Free}(\om)$.
\medskip

We will say that a family $\DD \sub \AAA$ is
$\mu$--\emph{dense} in $\AAA$, if 
\[ \inf\{\mu(a\bigtriangleup d)\colon d\in\DD, \
a\in \AAA\} = 0.\]

A family $\DD \sub \AAA$ is \emph{uniformly}
$\mu$--\emph{dense} in $\AAA$
if
\[ \inf\{\mu(a\setminus d)\colon d\in \DD, \ a\in
\AAA, \ d\leq a\} = 0. \]

We will say that the measure $\mu$ on $\alg$ is
\emph{separable} if there is a countable $\mu$--dense
family $\DD\sub \alg$ and the measure $\mu$ on
$\alg$ is \emph{uniformly regular} if there is a
countable uniformly $\mu$--dense family $\DD\sub
\alg$. 

Given a Boolean algebra $\alg$ with a strictly
positive measure $\mu$, we can define the {\em Fr\'echet-Nikodym
metric induced by $\mu$} by:
\[ d_\mu(a,b) = \mu(a \bigtriangleup b), \mbox{ for }a, b\in \alg.\]
Moreover, every function $f\colon \alg \to \algb$
which is a (metric) isomorphism is an isometry of 
$(\alg, d_\mu)$ and $(\algb, d_\nu)$. An
isometry $f$ between $(\alg, d_\mu)$ and $(\algb,
d_\nu)$  is not necessarily a metric isomorphism ($f({\bf 0})$ is not necessarily equal to ${\bf 0}$),
but the function $g(a) = f(a) \bigtriangleup f({\bf 0})$
is:
\begin{prop}
Suppose $\alg$ supports a measure $\mu$ and $\algb$ supports $\nu$.
If $f$ is an isometry of $(\alg, d_\mu)$ and $(\algb, d_\nu)$, then
the function $g$ defined by $g(a) = f(a) \bigtriangleup f({\bf 0})$ is a metric isomorphism of $(\alg,\mu)$ and ($\algb,\nu$).
\end{prop}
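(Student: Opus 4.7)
The plan is to decompose $g$ as $g = \tau \circ f$, where $\tau\colon \algb \to \algb$ is the ``translation'' $\tau(x) = x \bigtriangleup f(\mathbf{0})$. Because $\bigtriangleup$ is self-inverse as a group operation, $\tau$ is a bijection, and the translation invariance $\nu((x \bigtriangleup c) \bigtriangleup (y \bigtriangleup c)) = \nu(x \bigtriangleup y)$ shows that $\tau$ is also an isometry of $(\algb, d_\nu)$. Composing with the bijective isometry $f$ gives that $g$ is a bijective isometry from $(\alg, d_\mu)$ to $(\algb, d_\nu)$, and by construction $g(\mathbf{0}) = f(\mathbf{0}) \bigtriangleup f(\mathbf{0}) = \mathbf{0}$. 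Measure preservation is then automatic:
\[
\nu(g(a)) = d_\nu(g(a), g(\mathbf{0})) = d_\mu(a, \mathbf{0}) = \mu(a).
\]

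The main work is to show that $g$ preserves the Boolean structure, and my strategy is to reduce Boolean structure to order, and order to the metric. The key observation, which uses strict positivity essentially, is the purely metric characterisation
\[
a \leq b \iff d_\mu(a,b) = d_\mu(\mathbf{0}, b) - d_\mu(\mathbf{0}, a).
\]
Indeed, combining $\mu(a \bigtriangleup b) = \mu(a\setminus b) + \mu(b\setminus a)$ with $\mu(b) - \mu(a) = \mu(b\setminus a) - \mu(a \setminus b)$ reduces the right-hand equality to $\mu(a \setminus b) = 0$, which by strict positivity of $\mu$ forces $a \leq b$. Since $g$ is an isometry fixing $\mathbf{0}$, this criterion transfers in both directions, so $g$ is a bijective order isomorphism.

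To finish, I would invoke the standard fact that an order isomorphism between bounded lattices automatically preserves $\mathbf{0}$, $\mathbf{1}$, meets and joins, and that in a Boolean algebra the complement $a'$ is uniquely characterised by $a \wedge a' = \mathbf{0}$ and $a \vee a' = \mathbf{1}$; hence $g$ preserves complements as well. Thus $g$ is a Boolean isomorphism, and being an isometry it is a metric isomorphism of $(\alg, \mu)$ and $(\algb, \nu)$.

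The only genuinely delicate point I expect is the ``order from metric'' characterisation above. The hypothesis of strict positivity is essential there: without it, elements of $\mu$-measure zero would be indistinguishable in the pseudo-metric, and one would only obtain the order in the quotient measure algebra rather than in $\alg$ itself.
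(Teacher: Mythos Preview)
Your proof is correct. Both your argument and the paper's rest on the same two facts: the isometry $g$ preserves measure (since $g(\mathbf{0})=\mathbf{0}$), and from measure preservation together with the isometry identity $\nu(g(a)\bigtriangleup g(b))=\mu(a\bigtriangleup b)$ one extracts that $g$ preserves the order. The difference is only in how the Boolean structure is then recovered: you give a clean metric characterisation of $\leq$ and invoke the standard fact that a bijective order isomorphism between Boolean algebras is a Boolean isomorphism, whereas the paper proceeds more directly, first showing that $g$ sends disjoint pairs to disjoint pairs, then deducing preservation of complements, and finally combining monotonicity with the measure identity $\nu(g(a\vee b))=\nu(g(a)\vee g(b))$ for disjoint $a,b$ to obtain preservation of joins. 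Your packaging is arguably more conceptual and avoids the case analysis; the paper's version is more hands-on but entirely self-contained.
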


\begin{proof}
Using the fact that $\nu(g(a)) = \mu(a)$ for each $a\in \alg$ one can show that $g$ is monotonic and if $a \land b = {\bf 0}$, then $g(a) \land g(b) = {\bf 0}$. The latter implies that $g(a^c) = g(a)^c$ and $\nu(g(a\lor b)) = \nu(g(a) \lor g(b))$ for
each $a, b\in \alg$ such that $a\land b = {\bf 0}$. From the monotonicity and the latter it follows that $g(a\lor b) = g(a)\lor g(b)$ for each $a,b\in \alg$. Hence, $g$ is a metric isomorphism.
\end{proof}

Notice that a measure $\mu$ supported by $\alg$ is separable if and only if the
space $(\alg, d_\mu)$ is separable.

We will consider equivalence relations on Polish
spaces. We say that an equivalence relation ${\mathsf E}$ on
a Polish space $X$ is \emph{reducible} to an equivalence relation
${\mathsf F}$ on a Polish space $Y$ if there is a Borel
function $f\colon X\to Y$ such that
\[ x  {\mathsf E}   y \ \ \mbox{ iff } \ \ f(x)  \mathsf{F}  f(y) \]
for each $x, y\in X$.
Loosely speaking ${\mathsf E}$ is reducible to ${\mathsf F}$ if
it is not more complex than ${\mathsf F}$.
There are several benchmark equivalence relations,
which allow to place certain equivalence relation
in the complexity hierarchy (see e.g.
\cite{Gao-invariant} or \cite{Kanovei}). 
Particularly important is the notion of a 
Borel--complete (analytic--complete) equivalence
relation, i.e. such that every Borel 
(analytic) equivalence relation can be reduced to
it. An example of the former one is the
isomorphism of countable graphs. 

A Borel equivalence relation ${\mathsf E}$ is \emph{smooth} if 
it is reducible to the relation of identity on a standard Borel space, the minimal equivalence relation with respect to reducibility 
among equivalence relations with uncountably many equivalence classes.

\section{Equivalences of measures on the Cantor
algebra} \label{equivalences}

With the general aim of understanding the complexity 
of metric isomorphisms between Kelley algebras we
here study the simplest possible (non--atomic) case:  
a metric isomorphism between measures defined on the Cantor 
algebra $\alg_c$. 

We approach the question of the complexity of this relation 
by encoding measures in other mathematical objects.
As we have noticed in Section \ref{definitions}, studying isomorphisms between
measures on Boolean algebras is the same as 
investigating the isometry relation between
(Fr\' echet--Nikodym) metric spaces. 
This is promising since the isometry
of Polish spaces is a deeply explored equivalence relation.
However we cannot use here directly its theory 
since it is unclear how to recognize 
Fr\'echet--Nikodym spaces within countable 
metric spaces and how the theory of
complexity of isometries of Polish spaces 
can be used for countable metric spaces. 
We shall therefore use a different approach and study measures on the Cantor algebra.
We will encode a strictly positive measure on Cantor algebra as a
function from $2^{<\om}$ to $(0,1)$. We note that studying measures on the Cantor algebra is
not the same as studying measures on the Cantor space, although these two topics are quite related.
Any measure on the Cantor algebra induces a measure on the Cantor space. Conversely, any measure on the Cantor space can be restricted to the algebra of clopen sets and is thus related to a measure on the Cantor algebra. Measures on the Cantor space and many related concepts are studied in S. Gao's book
\cite{Gao-invariant}. \footnote{Note that some of the terminology used in \cite{} is different than the one employed here, in particular our notion of a strictly positive measure on the Cantor algebra is called `non-atomic' in
\cite{Gao-invariant}.} 

We consider three equivalence
relations  
which are strictly weaker than the
metric isomorphism. They are defined on three different subfamilies of measures on $\alg_c$ 
and they capture three different properties of measures: in Subsection \ref{trees} we deal
with strictly positive measures and the equivalence relation induced by an automorphism of $\cant$, in Subsection \ref{ideals} we 
study all measures on $\alg_c$ and the relation induced by the isomorphism of ideals on $\alg_c$ and in Subsection \ref{subsets} we
investigate strictly positive non--atomic measures and the relation induced by the equality of ranges. These
equivalence relations are in a way orthogonal to
each other, i.e. there is no inclusion between
any of their equivalence classes. None of the presented results can be used 
to show the complexity of the metric isomorphism in any of the above cases. Nevertheless, we try to 
draw some conclusions in Subsection \ref{problems}.

\subsection{Strictly positive measures on $\alg_c$ and the relation induced by an automorphism on $\cant$.}
\label{trees}

By \cite{Goncharov} (cf. \cite[Section 2]{Camerlo}),  given any isomorphic copy of the Cantor algebra,
there is a Borel procedure of finding its generating tree.  
We can identify the copy of $\alg_c$ with $\alg_c$ itself and the generating tree with $\cant$.
In this setting, every measure on $\alg_c$ is uniquely determined by the values of this measure
on $\cant$. 
Hence, every measure supported by $\alg_c$ is uniquely defined by 
an element of $(0,1)^{\cant}$.
Of course, not every function defined on
$2^{<\om}$ can be extended to a
measure on $\alg_c$ but in fact we can 
treat every element of
$(0,1)^{2^{<\om}}$ as a strictly positive measure on $\alg_c$ by using the
following Borel coding. Given $f\colon 2^{<\om} \to
(0,1)$ and $s\in 2^{<\om}$ let
\[ \mu_f(s)= \prod_{s(n)=0} f(s|n) \ \cdot
\prod_{s(n)=1} (1 - f(s|n)). \] 

So, we will treat the space of strictly
positive measures on $\alg_c$ as
$\mathsf{spM} = (0,1)^{2^{<\om}}$ with the standard topology.
It is a standard Borel space.
Every element of $\mathsf{spM}$ uniquely
defines a strictly positive measure, but a strictly positive
measure can be coded in many elements of $\mathsf{spM}$. 

We can then see the metric isomorphism 
as an equivalence relation on the standard Borel space.
Namely, we can consider the equivalence relation $\equiv$ on
$\mathsf{spM}$ induced by the metric isomorphism. 
Unfortunately, it is unclear to us how we can reduce $\equiv$ 
to an equivalence relation whose complexity is known, or which 
known equivalence relations can be reduced to $\equiv$, although it looks like $\equiv$ should be
Borel complete.
We can however say something about other equivalence
relations connected to the metric isomorphism. 

Denote by $\equiv_c$ the following equivalence
relation on $\mathsf{spM}$:
\[ f \equiv_c g \mbox{ if there is an
automorphism }\varphi \ \mbox{ of }\ \cant\mbox{ such that
}\forall s\in \cant f(s) = g(\varphi(s)). \] 
Notice that $f \equiv_c g$ implies $f \equiv
g$ (but the reverse implication does not hold). 

As $\equiv_c$ is defined by an automorphism of a compact Polish group, it follows from the known results in the theory of equivalence relations that it is a smooth Borel equivalence relation. We give a direct proof which also allows us to compare $\equiv_c$ with a well studied equivalence. Namely,  denote by $\cong$ the isomorphism of countable graphs. Note that
$\cong$ considered on the space of
finitely branching trees is a Borel
equivalence relation (cf. \cite[Theorem 1.1.3]{Harvey}).

\begin{thm}
$\equiv_c$ is Borel reducible to the isomorphism of finitely branching trees.
\end{thm}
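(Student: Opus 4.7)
The plan is to produce a Borel map $f\mapsto T_f$ from $\mathsf{spM}$ to the Polish space of finitely branching rooted trees such that $T_f\cong T_g$ iff $f\equiv_c g$. The tree $T_f$ will have a \emph{spine} isomorphic to $\cant$ together with a \emph{label} rooted subtree attached at every spine node $s$ that encodes the real value $f(s)$ only up to its isomorphism type. The design constraint is that the spine must be recognizable from the ambient graph structure alone, so that any isomorphism between $T_f$ and $T_g$ is forced to restrict to a tree automorphism of $\cant$ respecting the labels.

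First I would encode reals. Fix a Borel assignment $r\mapsto L_r$ from $(0,1)$ to pairwise non-isomorphic rigid rooted trees; one concrete choice is, for the canonical non-terminating binary expansion $r=0.b_1b_2b_3\cdots$, the one-ended tree whose unique ray $v_0,v_1,v_2,\ldots$ reads off the bits of $r$ by hanging a path of length $2$ off $v_n$ when $b_n=0$ and a path of length $3$ when $b_n=1$. The ray is the unique end of $L_r$, so $L_r$ is rigid and different values of $r$ produce non-isomorphic trees.

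Next, to assemble $T_f$, take a copy of $\cant$ as the spine (rooted at $\emptyset$) and attach to each $s\in\cant$ the root of a fresh copy of $L_{f(s)}$ through a small rigid \emph{interface gadget} $G$ engineered so that occurrences of $G$ in $T_f$ sit exactly at spine nodes and are not mimicked anywhere inside any $L_r$ or in the bare spine skeleton. The map $f\mapsto T_f$ is Borel because each bounded-depth truncation of $T_f$ is determined by finitely many values of $f$. With the gadget in place, spine vertices are precisely the neighbours of the distinguished attachment vertex of some copy of $G$, and the two child-edges at each gadget are the only permutable pair---matching exactly the freedom of swapping the two children of a spine node available inside a tree automorphism of $\cant$.

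The verification of the reduction is then routine. If $\varphi$ is a tree automorphism of $\cant$ witnessing $f\equiv_c g$, extend it by sending each gadget at $s$ to the one at $\varphi(s)$ and each $L_{f(s)}$ to the equal $L_{g(\varphi(s))}$ via its unique isomorphism; this yields $T_f\cong T_g$. Conversely, any isomorphism $\Phi\colon T_f\to T_g$ must send copies of $G$ to copies of $G$, hence restricts to a tree automorphism $\varphi$ of the spine, and its further restriction to the label at $s$ gives $L_{f(s)}\cong L_{g(\varphi(s))}$, forcing $f(s)=g(\varphi(s))$. The main obstacle I anticipate is the explicit design of $G$: it must be locally unmistakable inside $T_f$, avoid appearing inside any $L_r$, and admit precisely one nontrivial automorphism, namely the swap of its two child-attachment points. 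Engineering such a $G$---for example from small rigid trees with prescribed degree sequences chosen disjoint from those in the $L_r$'s---is standard graph combinatorics and would be the one step I would write out in full.
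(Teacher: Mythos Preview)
Your proposal is correct and follows the same architecture as the paper: take a copy of $2^{<\omega}$ as a spine and attach at every node $s$ a finitely branching tree that encodes the real $f(s)$, then argue that any isomorphism of the assembled trees must carry spine to spine and hence restrict to an automorphism of $2^{<\omega}$ matching the labels. The one difference is your interface gadget $G$. The paper does without it: with its encoding of a real as a ray $0,1,2,\ldots$ with a single terminal leaf hung off position $m$ whenever the $m$-th bit is $1$, the spine vertices are already structurally distinguished from label vertices (below a spine vertex there sits a full binary tree, while below any label vertex there is essentially a decorated ray), so an isomorphism is forced to send $T_0$ to $T_0$ directly. This eliminates precisely the step you single out as the main remaining obstacle; your version is not wrong, just slightly over-engineered.
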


\begin{proof}
We describe how to code an element of
$(0,1)^{\cant}$ in a finitely branching tree (in a Borel way).

Every real $r\in (0,1)$ (in fact we will
treat $r$ as an element of $2^\om$) can be coded in a
finitely branching tree, e.g. in the following
way. Start with a countable tree $\om$ with the relations
given by $n \mathsf{R} m$ if and only if $m=n+1$ and then
add a terminal node to $m$-th vertex if $r(m)=1$.

Fix a function $f\colon \cant \to (0,1)$. Start
with $T_0 = \cant$ and then stick to every
$s\in T_0$ the graph coding $f(s)$. We will denote 
the function described in this way by $\Psi$. Notice that $\Psi$ is Borel.

Denote by $\cong$ the isomorphism relation
on countable graphs. We will show that $\Psi(f)
\cong \Psi(g)$ if and only if $f \equiv_c g$. 

Assume  that there is an
isomorphism of graphs $F\colon \Psi(f) \to
\Psi(g)$. 
No vertex $s$ from $T_0$ can be mapped to a vertex
from a code of a real, since $s$ has infinitely
many pairwise \emph{disjoint} vertices below,
contrary to the vertices in the codes of reals. 
Thus, $F[T_0] = T_0$ and $F| T_0$ has to be an automorphism.
Moreover, two codes of reals are isomorphic only if they
code the same real, so finally $f \equiv_c g$.
On the other hand, an isomorphism $G\colon f \to g$ witnessing $f
\equiv_c g$ clearly induces an isomorphism between $\Psi(f)$
and $\Psi(g)$.

Thus, $\equiv_c$ is Borel reducible to $\cong$ on finitely--branching trees.
\end{proof}

\subsection{All measures on $\alg_c$ and the relation induced by the isomorphism of ideals on $\alg_c$.} \label{ideals}

We can also code measures on $\alg_c$ which
are not necessarily strictly positive. 
We can use the same formula as for strictly positive measures but 
we should consider only those functions from $\cant$ to $[0,1]$ satisfying 
the ($\Pi^0_2$) condition:
\[ \forall s\in \cant \ (f(s)=0 \implies
(f(s^{\land}0)=0
\mbox{ and } f(s^{\land}1)=0)). \]
Denote by $\mathsf{M}$ the space of elements of $[0,1]^{\cant}$ satisfying the above condition.
Notice that since $\mathsf{M}$ is a $G_\delta$ subspace of $[0,1]^{\cant}$, it
is a standard Borel space. We denote by $\equiv$ the 
equivalence relation on $\mathsf{M}$ induced by the metric isomorphism.
Denote by $=^+$ the equivalence relation on
$(0,1)^\om$ of equality of countable subsets (i.e.
$(x_n) =^+ (y_n) \mbox{ iff } \{x_n\colon
n\in\om\} = \{y_n\colon n\in \om\}$).
This is a Borel equivalence relation
strictly simpler than graph isomorphism but still quite
complex (see e.g. \cite[Section 15.3]{Gao-invariant}).

By $\equiv_m$ we mean the relation on $\mathsf{M}$ defined by 
\[ f \equiv_m g \mbox{ iff } (f(s) = 0 \iff g(s) = 0 \mbox{ for each }s\in \cant). \]
Clearly $f \equiv_m g$ if and only if the measures $\mu_f$ and $\mu_g$ are mutually absolutely continuous. This relation in the context of Cantor algebras is $\Pi^0_2$ and is actually smooth. In the
context of Cantor spaces, it
is a $\Pi^0_3$ relation and $=^+$ is reducible to $\equiv_m$ (see \cite[Lemma 8.5.5]{Gao-invariant}).

We are interested in a related equivalence relation:
\[ f \equiv_{z} g \mbox{ if there is }
g'\equiv g \mbox{ such that }
f\equiv_m g'.\]

\begin{thm} \label{non-SP}
	$\equiv_{z}$ is Borel complete.
\end{thm}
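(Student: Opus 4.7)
The plan is to show Borel completeness of $\equiv_{z}$ by Borel-reducing a canonical Borel-complete equivalence relation, namely the isomorphism of countable graphs, to $\equiv_{z}$.

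First I would reformulate $\equiv_{z}$ as an orbit equivalence. Writing $N_f = \{a \in \alg_c \colon \mu_f(a) = 0\}$ for the null ideal of $\mu_f$, the clause ``$g' \equiv g$'' delivers a Boolean automorphism $\phi \in \mathrm{Aut}(\alg_c)$ with $\mu_{g'} = \mu_g \circ \phi$, so $N_{g'} = \phi^{-1}(N_g)$; combined with $f \equiv_m g'$ (i.e.\ $N_f = N_{g'}$), we obtain that $f \equiv_{z} g$ if and only if some $\phi \in \mathrm{Aut}(\alg_c)$ satisfies $\phi(N_f) = N_g$. Via Stone duality $\mathrm{Aut}(\alg_c) \simeq \mathrm{Homeo}(2^\om)$ and ideals of $\alg_c$ correspond to open subsets of $2^\om$, so $\equiv_{z}$ identifies with the orbit equivalence of $\mathrm{Homeo}(2^\om)$ acting on the closed subsets $K_f = \mathrm{supp}(\mu_f) \subseteq 2^\om$ arising from $f \in \mathsf{M}$.

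Next I would construct the reduction by encoding graphs as closed subsets of $2^\om$. Given a graph $G = (\om, E)$, partition $2^\om \setminus \{1^\om\}$ into a sequence of clopen ``vertex chunks'' $C_n$, where $C_n$ is the cylinder of sequences beginning with $n$ ones followed by a zero (so each $C_n \simeq 2^\om$ and the family accumulates at $1^\om$). Inside $C_n$ pick disjoint clopen ``slots'' $D_{n,m}$ indexed by potential neighbours, and place in each $D_{n,m}$ a topologically rigid ``marker gadget'' whose homeomorphism type encodes whether $m \in N_G(n)$ -- for instance, a perfect Cantor subset if $m$ is a neighbour, and a distinguished singleton otherwise. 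Let $K_G \subseteq 2^\om$ be the closure of the union of these gadgets, and choose $f_G$ so that $\mu_{f_G}$ has support exactly $K_G$. To stay inside $\mathsf{M}$, arrange the generating tree of $K_G$ so that all truncations happen via $f_G(s) = 1$ (killing the right branch), thus avoiding the $\Pi^0_2$ propagation condition that $f_G(s) = 0$ would trigger.

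The map $G \mapsto f_G$ is then Borel, and the easy direction of the reduction is clear: any graph isomorphism $\sigma \colon G \to H$ induces a chunk-permuting self-homeomorphism of $2^\om$ (fixing $1^\om$, since any bijection of $\om$ extends continuously at the accumulation point) that carries $K_G$ onto $K_H$, whence $f_G \equiv_{z} f_H$. The main obstacle and hardest step is the converse rigidity: showing that every $h \in \mathrm{Homeo}(2^\om)$ with $h(K_G) = K_H$ is forced to permute the vertex chunks and, inside each, to match marker-gadget types. Rigidity requires arranging that (i) the chunk hierarchy is recoverable from topological invariants of $K_G$, so $h$ maps chunks to chunks, and (ii) the two gadget types are pairwise non-homeomorphic so the neighbour-pattern is an invariant. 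With these properties the induced permutation of $\om$ is a graph isomorphism, completing the Borel reduction and thereby showing that $\equiv_{z}$ is Borel complete.
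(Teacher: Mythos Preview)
Your reformulation of $\equiv_z$ as the orbit relation of $\mathrm{Aut}(\alg_c)$ on null ideals (equivalently, of $\mathrm{Homeo}(2^\om)$ on closed supports) is correct and is the essential observation. The paper exploits it directly: it cites the theorem of Camerlo \cite[Theorem~4]{Camerlo} that isomorphism of ideals on $\alg_c$ is already Borel complete, and then simply encodes each proper ideal $\II$ as an element $f_\II \in \mathsf{M}$ whose null ideal is exactly $\II$ (splitting mass evenly at each node of the generating tree except where $\II$ forces a child to have mass zero). With your reformulation this gives $f_\II \equiv_z f_\JJ$ iff $\II \cong \JJ$, and the proof is finished in a few lines. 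You are instead attempting to reprove Camerlo's result from scratch via an explicit graph encoding, which is a legitimate but considerably longer route.

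The specific encoding you sketch, however, has a genuine gap in the rigidity step. Grant that a homeomorphism $h$ with $h(K_G) = K_H$ induces a bijection $\sigma$ of chunks, carrying $K_G \cap C_n$ to $K_H \cap C_{\sigma(n)}$. Inside each $C_n$, $h$ then induces some bijection $\pi_n$ of the slots, sending the gadget in $D_{n,m}$ to the gadget in $D_{\sigma(n),\pi_n(m)}$; matching gadget types yields only $m \in N_G(n) \iff \pi_n(m) \in N_H(\sigma(n))$. To conclude that $\sigma$ is a graph isomorphism you need $\pi_n = \sigma$, but nothing in your construction ties the slot index $m$ inside $C_n$ to the global chunk $C_m$: the slots are just a sequence of disjoint clopen subsets of $C_n$, freely permutable by self-homeomorphisms. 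So from $h(K_G) = K_H$ you cannot recover \emph{which} vertices are neighbours of $n$, only how many of each gadget type occur---and when both $N_G(n)$ and its complement are infinite, not even that. A correct direct encoding must rigidly link the slot $D_{n,m}$ to the chunk $C_m$; this is exactly where Camerlo's proof does substantial work, and it is far simpler to cite it and then code ideals as measures, as the paper does.
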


\begin{proof}
We will use the fact that the isomorphism
relation of ideals on the Cantor algebra is
Borel complete (see \cite[Theorem 4]{Camerlo}). Ideals on $\alg_c$
are isomorphic if there is an automorphism of $\alg_c$ sending one
to another. Notice that ideals on $\alg_c$ are equal if they are equal on its
generating tree.

Every proper ideal $\II$ on $\alg_c$ can be coded in a
measure. We define the measure inductively with respect to the generating tree
$\cant$. For each $s\in \cant$
\begin{itemize}
 	\item if $\mu(s) = 0$, then $\mu(s^\land 0) =
\mu(s^\land 1) = 0$;
 	\item if $\mu(s)>0$ and both
$s^\land 0$ and $s^\land 1$ are not in $\II$, then
let $\mu(s^\land 0) = \mu(s^\land 1) = 1/2 \ 
\mu(s)$;
 	\item finally, notice that if $\mu(s)>0$ and for some $l<2$, 
$s^\land l\in
\II$, then $s^\land (1-l)\notin \II$. We define
$\mu(s^\land (1-l)) = \mu(s)$ and $\mu(s\land l)=0$.
\end{itemize}

Now, encode the measure in $\mathsf{M}$ by the procedure described above.
In this way we assign 
to each ideal $\II$ on $\alg_c$ an element $f_\II$ of $\mathsf{M}$ in such a (Borel) way
that $f_\II \equiv_{z} f_\JJ$ if
and only if the ideals are isomorphic.
\end{proof}

\subsection{Strictly positive non--atomic measures on $\alg_c$ and relation induced by ranges of measures.} \label{subsets}

The basic property which allows to distinguish
measures defined on $\alg_c$ is the range. If
${\rm rng}(\mu)\ne {\rm rng}(\nu)$, then $\mu$ and $\nu$
cannot be metrically isomorphic.
So, it is tempting to code a measure in the countable 
subset of $[0,1]$ as its
range. However, there are some reasons why this
coding is not useful for our purposes.
Firstly, only particular subsets of $[0,1]$
can serve as a range of a measure. Secondly, two non--isomorphic
measures can have the same range. 

Indeed, consider $\algb\sub \alg_c$ (treated here
as ${\rm Clopen}(2^\om)$) generated by
$B_0=[(0,0)]$,
$B_1=[(0,1)]$, $B_2=[(1,0)]$ and $B_3=[(1,1)]$. Let $\mu$ and
$\nu$ be measures defined on $\algb$ in the following way. 
Let $\mu( B_0 ) 
= \mu( B_1 ) = 1/4$, $\mu( B_2 )=3/8$, $\mu(
B_3 )=1/8$. Let $\nu( B_0 ) = \nu( B_3
)= 3/8$ and $\nu( B_1 )= \nu( B_2 )=1/8$. 
Clearly, ${\rm rng}(\mu)={\rm rng}(\nu)$
and $\mu$ and $\nu$ are not isomorphic. Now,
extend $\mu$ and $\nu$ to $\mu'$ and $\nu'$
defined on $\alg_c$ in such a way that
${\rm rng}(\mu')={\rm rng}(\nu')$ and $\mu'(A)\notin \rational$ 
for every $A\in \alg_c \setminus \algb$. It is
easy to see that it can be done and that it follows
that $\mu'$ and $\nu'$ are not isomorphic.

However, there exists quite a nice coding of measures into
countable subsets of $(0,1)$, at least for non--atomic strictly positive measures.
Denote by $\mathsf{naM}$ the (Borel) subspace of $\mathsf{spM}$ consisting of
those elements of $\mathsf{spM}$ which induce non--atomic measures.
Enumerate $\cant \setminus \{\emptyset\}= \{s_n\colon n\in\om\}$ in
such a way that $|s_n| < |s_{n+1}|$ or  $|s_n| = |s_{n+1}|$
and $s_n <_{lex} s_{n+1}$ for each $n$. 
For $s\in \cant$ define
\[ \wh{s} = \bigcup\{[t]\colon
|t|=|s|\mbox{ and }t<_{lex} s\}.
\]
Notice that $\wh{\TTT} = \{\wh{s_n}\colon
n\in\om\}$ generates $\alg_c$. 
Now, define $f\colon \mathsf{naM} \to [(0,1)]^{\om}$ by 
\[ f(g)(n) = \mu_g(\wh{s_n}), \]
where $\mu_g$ is the measure induced by $g\in \mathsf{naM}$.
In other words, $r\in (0,1)$ is an element of $f(\mu)$ if
there is a level $l$ of $2^{<\om}$ and a number
$n<2^l$ such that the sum of measures of first $n$
elements of this level equals $r$ (whatever is
meant by ``first $n$ elements of level'' as long as we
have some fixed Borel procedure in mind).

Denote by $\equiv_r$ the equivalence
relation of measures on the Cantor algebra such
that $\mu \equiv_r \nu$ iff ${\rm rng}(f(\mu))={\rm rng}(f(\nu))$.

\begin{thm}
\begin{enumerate}
	\item $f$ is onto a $G_\delta$ subset of
		$[(0,1)]^\om$,
	\item $f$ is Borel,
	\item $\mu \equiv_r \nu$ implies $\mu \equiv
		\nu$ but the reverse implication does not
		hold,
	\item $\equiv_r$ is reducible to $=^+$.
\end{enumerate}
\end{thm}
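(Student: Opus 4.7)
My plan is to dispatch (2) and (4) immediately and then reduce (1) and (3) to combinatorial and analytic facts, respectively. For (2), each coordinate $f(g)(n)=\mu_g(\wh{s_n})=\sum_{|t|=|s_n|,\,t<_{lex}s_n}\mu_g([t])$ is a finite sum, and each $\mu_g([t])$ is, by the defining formula for $\mu_g$, a polynomial in the coordinates of $g$; hence $f$ is continuous, and in particular Borel. Part (4) is then immediate since $\mu\equiv_r\nu$ is literally $f(\mu)=^+f(\nu)$, so $f$ itself is the required Borel reduction.

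For (1), I would characterize the image combinatorially. At each level $k$, list the cumulative values $v^{(k)}_i:=\mu_g(\wh{s})$ with $s$ the $i$-th string of length $k$ in lex order, and set $v^{(k)}_{2^k}:=1$. The image of $f$ consists of exactly those sequences satisfying (a) the refinement equation $v^{(k+1)}_{2i}=v^{(k)}_i$, coming from the identity $\wh{s^\frown 0}=\wh{s}$; (b) strict positivity $v^{(k)}_{i+1}>v^{(k)}_i$ for every $i<2^k$; and (c) non-atomicity $\max_i(v^{(k)}_{i+1}-v^{(k)}_i)\to 0$ as $k\to\infty$. Condition (a) is closed, (b) is a countable conjunction of open strict inequalities and hence $G_\delta$, and (c) has the form $\bigcap_{\eps>0}\bigcup_k\bigcap_i[\mathrm{gap}_i<\eps]$, which is $G_\delta$. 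Conversely, any sequence satisfying (a)--(c) determines a non-atomic strictly positive measure on $\alg_c$ via $\mu([s]):=v^{(k)}_{i+1}-v^{(k)}_i$, so the image is precisely this $G_\delta$ set.

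For (3) the forward implication is the main content and the principal obstacle. Identify $2^\om$ with $[0,1]$ via binary expansion and set $F_\mu(x):=\mu(\{y\in 2^\om:y<_{lex}x\})$. Non-atomicity of $\mu$ yields continuity of $F_\mu$ and strict positivity on clopens yields strict monotonicity (every nondegenerate subinterval of $[0,1]$ contains a basic clopen of positive measure), so $F_\mu\colon[0,1]\to[0,1]$ is a homeomorphism. Writing $D_0$ for the dyadic rationals in $[0,1)$, one has $\operatorname{range}(f(\mu))=F_\mu(D_0)$, so $\mu\equiv_r\nu$ yields an order-preserving bijection $\varphi:=F_\nu^{-1}\circ F_\mu\colon D_0\to D_0$, which extends uniquely to a homeomorphism of $[0,1]$ preserving the dyadic rationals. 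Such $\varphi$ sends each generator $[a,b)$ of $\alg_c$ (with $a,b$ dyadic) to the dyadic half-open interval $[\varphi(a),\varphi(b))$, hence restricts to a Boolean automorphism of $\alg_c$; the identity $F_\nu\circ\varphi=F_\mu$ then gives $\mu(A)=\nu(\varphi(A))$ for all $A\in\alg_c$, i.e., a metric isomorphism. The delicate step is lifting the $D_0$-bijection to a bona fide automorphism of $\alg_c$, which relies on $F_\mu$ being a true homeomorphism—and so on both halves of the hypothesis built into $\mathsf{naM}$. For the failure of the reverse direction, take $\mu$ a Bernoulli product measure with parameter $p\ne 1/2$ and $\nu:=\mu\circ\sigma^{-1}$ where $\sigma$ swaps the two children of the root; then $\mu\equiv\nu$ tautologically via $\sigma$, while a direct computation at level two (say for $p=1/3$) already shows the cumulative lex-sums for $\mu$ and $\nu$ do not coincide, so $\mu\not\equiv_r\nu$.
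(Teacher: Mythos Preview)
Your argument is correct overall; parts (2) and (4) match the paper, and your treatment of (1) is actually more explicit than the paper's brief sketch of ``density plus intertwining''.

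For (3) forward you take a genuinely different route. The paper argues inside the Boolean algebra: since $\mu,\nu$ are strictly positive they are injective on the chain $\wh\TTT$, so matching $\mu$- and $\nu$-values gives an order-preserving bijection $\wh\varphi\colon\wh\TTT\to\wh\TTT$; taking successive differences then produces a measure-preserving map from $\TTT$ to another generating tree $\TTT'$, which extends to the required automorphism. Your CDF construction $\varphi=F_\nu^{-1}\circ F_\mu$ is essentially this same $\wh\varphi$ transported to the dyadics via the identification $\wh\TTT\cong D_0$. Your version is more analytic and connects the construction to classical distribution-function machinery, while the paper's stays purely combinatorial and avoids having to extend $\mu$ beyond the clopen algebra.

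There is one genuine gap, in (3) reverse. The relation $\mu\equiv_r\nu$ compares the \emph{full} ranges $\{f(\mu)(n):n\in\omega\}$ and $\{f(\nu)(n):n\in\omega\}$; observing that the level-$2$ cumulative sums differ does not preclude each level-$2$ value for $\mu$ from reappearing at some higher level for $\nu$ and vice versa. Your Bernoulli example is correct, but the justification needs more: for $p=1/3$ one can check that $1/3=F_\mu(1/2)$ lies in ${\rm rng}(f(\mu))$, whereas $F_\nu(d)=1/3$ for dyadic $d\le 1/2$ would force $F_\mu(2d)=p/(1-p)=1/2$, impossible since every value of $F_\mu$ at a dyadic point has denominator a power of $3$. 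The paper's device is cleaner: take $\mu$ injective on all of $\alg_c$ and let $g$ be any nontrivial automorphism; since $\wh\TTT$ is totally ordered by inclusion, no nontrivial automorphism can permute it, and injectivity of $\mu$ then forces ${\rm rng}(f(\mu))\ne{\rm rng}(f(\nu))$ directly.
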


\begin{proof}
To see (1) notice that a sequence is in the range 
of $f$ if it is dense in $(0,1)$ (a $G_\delta$
condition) and satisfies 
a certain more complicated, though still $G_\delta$, 
condition of ``intertwining''
($0<$ {\boldmath $x_0$} $<1$,
$0<$ {\boldmath $x_1$} $<x_0<$ {\boldmath $x_2$} $<1$,
$0<$ {\boldmath $x_3$} $<x_1<$ {\boldmath
$x_4$} $<x_0<$ {\boldmath $x_5$} $<x_2<$
{\boldmath $x_6$} $<1$ and so on).

(2) is immediate.

To prove (3) assume that ${\rm rng}(f(\mu)) =
{\rm rng}(f(\nu))$. 
We will define a metric isomorphism $\varphi$ between
$(\alg_c,\mu)$ and $(\alg_c, \nu)$.
Since $\mu$ and $\nu$ are strictly positive, they are
one--to--one on $\wh{\TTT}$ and therefore the
following definition of a measure--preserving bijection
$\wh{\varphi}\colon \wh{\TTT}\to \wh{\TTT}$ makes sense:
\[\wh{\varphi}(\wh{s}) = t \mbox{ iff }
t \in \wh{\TTT} \cap
\nu^{-1}[\mu(\wh{s})].\]
Using $\wh{\varphi}$ one can define in a
natural way a measure--preserving bijection 
$\varphi'\colon \TTT \to \TTT'$, where $\TTT'$ is
a generating tree of $\alg_c$:
\[ \varphi'(s_{n+1}) =
\wh{\varphi}(\wh{s_{n+1}}) \setminus
\wh{\varphi}(\wh{s_n}) \]
if $|s_n|=|s_{n+1}|$ and
\[ \varphi'(s_{n+1}) =
\wh{\varphi}(\wh{s_{n+1}}) \]
otherwise. Clearly $\varphi'$ has a unique
extension to an
isomorphism $\varphi\colon \alg_c \to \alg_c$
such that $\nu(\varphi(a)) = \mu(a)$ for each
$a\in \alg_c$.

To see the second part of (3) consider $\mu$ which
is one--to--one on $\alg_c$ and $\nu$ defined by
$\nu(a) = \mu(g(a))$, where $g$ is a nontrivial automorphism of
$\alg_c$. Then $\mu \equiv \nu$ but ${\rm rng}(f(\mu)) \ne
{\rm rng}(f(\nu))$. 

(1) and (2) imply that 
the equivalence relation $\equiv_r$ is reducible
to $=^+$ restricted to a $G_
\delta$ subset of $(0,1)^\om$,
which is itself reducible 
to $=^+$.
\end{proof}

\subsection{Problems} \label{problems}

We have shown that two equivalence relations similar to 
metric isomorphism of strictly positive measures on $\alg_c$ are Borel. 
However, since both of them seem to be much
simpler than the metric isomorphism, we think that 
the metric isomorphism itself is probably more 
complicated, even if considered only on strictly positive non--atomic measures 
on $\alg_c$.

\begin{problem}
What is the complexity of the equivalence relation
of the metric isomorphism of strictly positive measures on $\alg_c$?
What is the complexity of the metric isomorphism of strictly positive
non--atomic measures on $\alg_c$?
\end{problem}

Theorem \ref{non-SP} suggests that the  isomorphism of measures 
which are not necessarily strictly positive is most probably more complicated. 

\begin{problem}
What is the complexity of the equivalence relation of the metric isomorphism of all measures on $\alg_c$?
\end{problem}

Let us also indicate another possible area of research.

Superatomic Boolean algebras are known to have good
classification (they are classified by $(\alpha,
\beta)$, where $\alpha$ is the Cantor--Bendixon
height and $\beta$ is the number of atoms in the
last Cantor--Bendixon  derivative, cf.
\cite[Section 4]{Day}). Moreover, all
measures on superatomic algebras are purely
atomic, i.e. of the form
\[ \mu = \sum_{n\in\om} a_n \delta_{x_n} \]
for a sequence of real numbers $(a_n)_n$ and a
sequence of points $(x_n)_n$ in the Stone space of the
algebra. These facts motivate:
\begin{problem}
Is there a (reasonable) classification of measures
on countable superatomic Boolean algebras?
\end{problem}

\section{A characterization of uniform
regularity} \label{uniform regularity}

In this section we will prove the following theorem.\footnote{We emphasize that homomorphisms
throughout are not assumed to preserve infinitary operations.}

\begin{thm} \label{characterization}
Assume that a Boolean algebra $\alg$ supports a uniformly regular non--atomic measure $\mu$.
Then $(\alg, \mu)$ is metrically isomorphic to a subalgebra of the Jordan algebra with the
Lebesgue measure.
Consequently, a Boolean algebra supports a non--atomic
uniformly regular measure if and only if it is isomorphic to a
subalgebra of the Jordan algebra $\JJ$ containing a dense
Cantor subalgebra.
\end{thm}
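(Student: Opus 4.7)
The plan is to first extract a countable, atomless, $\mu$-uniformly dense subalgebra $\AAA\le\alg$; then to embed $(\AAA,\mu|_\AAA)$ measure-preservingly into $(\JJ,\lambda)$ via back-and-forth, tracking open Jordan representatives in $2^\om$; and finally to extend this embedding to $(\alg,\mu)$ by inner approximations from $\AAA$, using the uniform regularity of $\mu$ twice (for $b$ and for $b^c$) to force the extension to land in $\JJ$ rather than merely in the Cohen algebra.

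For the first step, take any countable $\mu$-uniformly dense family in $\alg$ and iteratively enlarge it by closing under the Boolean operations and splitting every positive-measure element via the non-atomicity of $\mu$ on $\alg$; since uniform density is preserved by enlargement, the result $\AAA$ is a countable $\mu$-uniformly dense Cantor subalgebra of $\alg$ on which $\mu|_\AAA$ is non-atomic and strictly positive. For the second step, enumerate $\AAA$ and construct $\psi\colon\AAA\to\JJ$ stage by stage. The key ingredient is a divisibility lemma for $(\JJ,\lambda)$: for every $j\in\JJ$ with $\lambda(j)>0$ and every $r\in[0,\lambda(j)]$ there is $j'\in\JJ$ with $j'\le j$ and $\lambda(j')=r$, proved by intersecting a Jordan representative of $j$ with the lex-initial-segments $\{x\in 2^\om:x\prec_{\mathrm{lex}}p\}$ (themselves Jordan, their topological boundary being a singleton) and applying continuity in $p$. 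Throughout, choose specific open Jordan representatives $D_d\subseteq 2^\om$ of $\psi(d)$ coherently, so that $d_1\le d_2$ implies $D_{d_1}\subseteq D_{d_2}$ and $d_1\wedge d_2=\mathbf{0}$ implies $D_{d_1}\cap D_{d_2}=\emptyset$; this is achievable because at each splitting stage one replaces an already chosen Jordan set by a disjoint union of two sub-Jordan pieces of prescribed measures.

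For the third step, define
\[
U(b)=\bigcup\{D_d:d\in\AAA,\ d\le b\},\qquad V(b)=\bigcap\{\overline{D_e}:e\in\AAA,\ e\ge b\},
\]
and set $\phi(b):=[U(b)]\in\algc$. Uniform regularity applied to $b$ gives $\lambda(U(b))=\mu(b)$, and applied to $b^c$ gives $\lambda(V(b))=\mu(b)$; since $U(b)\subseteq V(b)$ by coherence, the set $V(b)\setminus U(b)$ is closed and Lebesgue-null, hence nowhere dense (non-empty clopens of $2^\om$ have positive $\lambda$-measure), so the boundary of $U(b)$ is null, placing $\phi(b)$ in $\JJ$ with $\lambda(\phi(b))=\mu(b)$. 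The Boolean identities for $\phi$ follow from $U(b_1\wedge b_2)=U(b_1)\cap U(b_2)$ (immediate), together with $U(b)\cap U(b^c)=\emptyset$ and $2^\om\setminus(U(b)\cup U(b^c))$ being closed and $\lambda$-null (giving $\phi(b^c)=\phi(b)^c$); the join identity then follows from $\phi(b_1)\vee\phi(b_2)\le\phi(b_1\vee b_2)$ together with matching Lebesgue measures, using strict positivity of $\lambda$ on $\JJ$ (which itself follows from the boundary-is-null argument above). Injectivity is immediate: $\phi(b)=\mathbf{0}$ makes $U(b)$ open and meager, hence empty, so uniform regularity and strict positivity of $\mu$ force $b=\mathbf{0}$. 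This proves the main implication; the ``consequently'' biconditional then follows because $\phi(\AAA)$ is a $\lambda$-uniformly-dense Cantor subalgebra of $\phi(\alg)\le\JJ$ (as $\phi$ is an isometry), and conversely any subalgebra $\alg'\le\JJ$ containing a dense Cantor subalgebra inherits uniform regularity from that subalgebra and non-atomicity because $\alg'$ is atomless and $\lambda$ is strictly positive on $\JJ$.

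The step I expect to be the main technical obstacle is maintaining the coherence of the Jordan representatives $D_d$ in step two, since this coherence is precisely what allows the inner and outer approximations in step three to combine set-theoretically into Boolean operations and remain inside $\JJ$; without it one would only obtain an extension of $\psi$ into the Cohen algebra whose image need not consist of Jordan classes.
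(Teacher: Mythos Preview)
Your argument is essentially correct and takes a route that differs in organization from the paper's. The paper factors the embedding as $(\alg,\mu)\hookrightarrow(\JJ_\mu,\wh\mu)\cong(\JJ,\lambda)$: a purely algebraic step (Proposition~\ref{general-Cohen}) shows that whenever $\algb$ is uniformly $\mu$--dense in $\alg$, the map $a\mapsto\bigvee\{b\in\algb:b\le a\}$ is a metric embedding of $(\alg,\mu)$ into $\sigma(\algb)$, whose image automatically lies in $\JJ_\mu$; then a separate argument (Theorem~\ref{jordan}) proves $(\JJ_\mu,\wh\mu)\cong(\JJ,\lambda)$ for every non--atomic $\mu$ by building, inside $\JJ_\mu$, a \emph{dense} Cantor subalgebra on which $\wh\mu$ takes dyadic values. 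You instead first embed the countable dense $(\AAA,\mu\!\restriction\!\AAA)$ directly into $(\JJ,\lambda)$ and only then extend to $\alg$, using explicit open Jordan representatives to certify that the extension stays inside $\JJ$. Both routes hinge on the same divisibility principle (your lex--segment argument is a concrete version of Lemma~\ref{full range}). The paper's factorization has the advantage of isolating the independently interesting fact that all the $\mu$--Jordan extensions $\JJ_\mu$ are mutually metrically isomorphic; your route is more direct for the characterization itself but does not yield that statement.

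Two small remarks. First, your step two is a forth construction, not back--and--forth: you are embedding a countable algebra into an uncountable one, so there is no ``back'' step. Second, the set--level identity $U(b_1\wedge b_2)=U(b_1)\cap U(b_2)$ is not quite immediate: the inclusion $\supseteq$ would need $D_{d_1}\cap D_{d_2}\subseteq D_{d_1\wedge d_2}$, which in turn needs the atom--representatives at each stage to \emph{exactly} partition their parent, whereas splitting an open Jordan set into two open Jordan pieces typically loses a closed null sliver. This does not damage the proof: from monotonicity, $\phi(\mathbf 0)=\mathbf 0$, measure preservation, and ``disjoint $\Rightarrow$ disjoint'' (all of which your stated coherence does give), one obtains that $\phi$ is a homomorphism by the standard route---first complements (your argument), then joins of disjoint pairs via matching measures and strict positivity, then general joins via $b_1\vee b_2=b_1\vee(b_2\setminus b_1)$---so the meet identity follows rather than needing to be checked directly.
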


At first, we show that the Boolean algebras supporting non--atomic uniformly regular measures 
can be seen as subalgebras of the Cohen algebra:

\begin{prop} \label{Cohen}
Assume that a Boolean algebra $\alg$ supports a non--atomic uniformly regular measure $\mu$.
Then $\alg$ is isomorphic to a subalgebra of the Cohen algebra.
\end{prop}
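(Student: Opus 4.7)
The plan is to use a countable uniformly $\mu$-dense subalgebra as a scaffold and to identify $\alg$ with a subalgebra of its completion, which will turn out to be $\algc$. Let $\DD \subseteq \alg$ be a countable $\mu$-uniformly dense family and let $\algb$ be the Boolean subalgebra of $\alg$ generated by $\DD$; it is countable, still $\mu$-uniformly dense, and I claim it is atomless. Indeed, if $e \in \algb$ were a nonzero atom of $\algb$, then $\mu(e) > 0$ by strict positivity, and the non-atomicity of $\mu$ on $\alg$ would give $b \in \alg$ with $b \leq e$ and $0 < \mu(b) < \mu(e)$; uniform regularity would then supply $d \in \algb$ with $d \leq b$ and $\mu(d) > 0$, so $\mathbf{0} \neq d < e$ in $\algb$, a contradiction. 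Hence $\algb \cong \alg_c$, and fixing such an isomorphism identifies the Stone space of $\algb$ with $2^\om$; write $C_d \subseteq 2^\om$ for the clopen corresponding to $d \in \algb$.

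For each $a \in \alg$ set
\[
U_a \;=\; \bigcup\{C_d : d \in \algb,\ d \leq a\},
\]
an open subset of $2^\om$, and define $\phi(a) = [U_a] \in \Borel(2^\om)/\mathrm{Meager} = \algc$. I will show $\phi$ is an injective Boolean homomorphism. Meet preservation is easy: since $\algb$ is a subalgebra of $\alg$, a direct check gives $U_{a \wedge b} = U_a \cap U_b$, and hence $\phi(a \wedge b) = \phi(a) \wedge \phi(b)$. For injectivity, if $a \neq a'$ then (say) $a \wedge (a')^c \neq \mathbf{0}$; uniform regularity yields a nonzero $d \in \algb$ with $d \leq a \wedge (a')^c$, and for every $d' \in \algb$ with $d' \leq a'$ one has $d \wedge d' = \mathbf{0}$ (since $d \leq (a')^c$), so $C_d \cap U_{a'} = \emptyset$. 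Thus $[C_d] \leq \phi(a)$ while $[C_d] \wedge \phi(a') = [0]$, and as $C_d$ is a nonempty open set, $[C_d] \neq [0]$, so $\phi(a) \neq \phi(a')$.

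The main obstacle is to show that $\phi$ preserves complements, and I handle it with a Baire category argument. The same disjointness observation as above forces $U_a \cap U_{a^c} = \emptyset$, so $U_{a^c} \subseteq 2^\om \setminus U_a$, and it remains to prove that the closed set $F := 2^\om \setminus (U_a \cup U_{a^c})$ is nowhere dense (hence meager). Suppose for contradiction that some nonempty basic clopen $C_d$, with $d \in \algb \setminus \{\mathbf{0}\}$, is contained in $F$. Then $C_d \cap U_a = \emptyset$ means every $d' \in \algb$ with $d' \leq a$ satisfies $d \wedge d' = \mathbf{0}$, i.e.\ $d' \leq a \wedge d^c$. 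Taking the supremum via uniform regularity yields $\mu(a) \leq \mu(a \wedge d^c)$, so $\mu(a \wedge d) = 0$, and by strict positivity $d \leq a^c$. A symmetric argument using $C_d \cap U_{a^c} = \emptyset$ gives $d \leq a$, whence $d = \mathbf{0}$, a contradiction. Therefore $F$ is closed with empty interior, $\phi(a^c) = \phi(a)^c$, and $\phi$ realises $\alg$ as a subalgebra of $\algc$.
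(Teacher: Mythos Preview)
Your proof is correct and follows essentially the same strategy as the paper: take a countable atomless uniformly $\mu$-dense subalgebra $\algb\cong\alg_c$ and send $a\in\alg$ to $\bigvee\{b\in\algb:\,b\le a\}$ in the Cohen algebra. The paper packages this as a slightly more general statement (its Proposition~\ref{general-Cohen}) about any uniformly $\mu$-dense $\algb$, and then specialises; it verifies preservation of $\lor$ and $^c$ using only the fact that $\algb$ is dense in $\alg$, reserving the measure for injectivity. You instead work concretely with open sets in $2^\om$, check $\land$ (which is set-theoretically trivial) and then invoke the measure in the complement argument to force $\mu(a\wedge d)=0$. Both routes arrive at the same embedding; the paper's is a bit more portable since its homomorphism verification is purely order-theoretic, while yours makes the Baire-category picture explicit. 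One small redundancy: once you obtain $d\le a^c$ in the complement step, you already have $C_d\subseteq U_{a^c}$, contradicting $C_d\subseteq F$, so the symmetric half is unnecessary.
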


Before proving the above proposition we will show a more general fact. Recall that every finitely additive
measure on a Boolean algebra $\alg$ has a unique extension to a measure on its $\sigma$--completion $\sigma(\alg)$.

\begin{prop} \label{general-Cohen}
Assume that a Boolean algebra $\alg$ supports a measure $\mu$ and that $\algb$ is uniformly $\mu$--dense in $\alg$. 
Then $(\alg, \mu)$ is metrically isomorphic to $(\alg', \mu')$, where $\alg' \sub \sigma(\algb)$ 
and $\mu'$ is the unique extension of $\mu|_{\algb}$ to $\sigma(\algb)$.
\end{prop}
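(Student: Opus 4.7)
The plan is to construct a measure-preserving injective Boolean homomorphism $\phi \colon \alg \to \sigma(\algb)$ extending the inclusion $\algb \hookrightarrow \sigma(\algb)$; then $\alg' = \phi[\alg]$ is the required subalgebra.

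The key consequence of uniform density is that each $a \in \alg$ is canonically sandwiched inside $\sigma(\algb)$. Using uniform density for $a$, I would pick an increasing sequence $(\underline{b}_n^a) \sub \algb$ with $\underline{b}_n^a \le a$ and $\mu(a \setminus \underline{b}_n^a) \to 0$; applying it to $a^c$ and complementing yields a decreasing $(\overline{b}_n^a) \sub \algb$ with $\overline{b}_n^a \ge a$ and $\mu(\overline{b}_n^a \setminus a) \to 0$. Set $\underline{a} = \sup_n \underline{b}_n^a$ and $\overline{a} = \inf_n \overline{b}_n^a$ in $\sigma(\algb)$. Then $\underline{a} \le \overline{a}$ and $\mu'(\underline{a}) = \mu'(\overline{a}) = \mu(a)$, so $\overline{a} \setminus \underline{a}$ has $\mu'$-measure zero; any measure-preserving lift of $a$ must lie in the $\mu'$-null bracket $[\underline{a},\overline{a}]$, which already tells us the lift is essentially unique.

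I would then build $\phi$ by transfinite recursion on a well-ordering $\{a_\xi : \xi < \kappa\}$ of $\alg \setminus \algb$, setting $\phi|_\algb = \mathrm{id}_\algb$ initially and taking unions at limit stages. At successor stage $\xi+1$, given a partial Boolean and measure-preserving embedding on $\alg_\xi = \langle \algb, a_\eta : \eta < \xi\rangle$, choose $\phi(a_\xi) \in \sigma(\algb)$ satisfying $\phi(d) \le \phi(a_\xi)$ for every $d \in \alg_\xi$ with $d \le a_\xi$, and $\phi(a_\xi) \le \phi(d)$ for every $d \in \alg_\xi$ with $d \ge a_\xi$; existence of such a $\phi(a_\xi)$ follows from the preceding paragraph together with measure preservation, which forces the lower and upper envelopes of the constraints to have equal $\mu'$-measure $\mu(a_\xi)$ and hence a $\mu'$-null gap. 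Every element of $\alg_{\xi+1}$ has the form $(d_1 \wedge a_\xi) \vee (d_2 \wedge a_\xi^c)$ with $d_1,d_2 \in \alg_\xi$, and well-definedness of the obvious extension reduces to the implication that $d_1 \wedge a_\xi = d_1'\wedge a_\xi$ implies $\phi(d_1) \wedge \phi(a_\xi) = \phi(d_1')\wedge \phi(a_\xi)$; this holds because $d_1 \triangle d_1' \le a_\xi^c$ gives $a_\xi \le (d_1\triangle d_1')^c \in \alg_\xi$, and the upper-bound constraint then yields $\phi(a_\xi) \le \phi((d_1\triangle d_1')^c) = (\phi(d_1)\triangle\phi(d_1'))^c$. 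Measure preservation is immediate from the choice, and strict positivity of $\mu$ on $\alg$ gives injectivity.

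The main obstacle is that the envelopes at each recursion step are sups and infs over potentially uncountable constraint sets in $\sigma(\algb)$, which is merely $\sigma$-complete. Uniform density handles this cleanly for constraints coming from $\algb$ by providing a countable cofinal subfamily saturating $\mu$-measure $\mu(a_\xi)$; the delicate point is ensuring that the constraints introduced by previously lifted elements $a_\eta$ ($\eta<\xi$) do not tighten the envelope beyond the $\mu'$-null bracket, i.e.\ that the \emph{literal} (not merely $\mu'$-a.e.) inequalities can be arranged throughout the recursion. One handles this by exploiting the $\mu'$-null freedom in the bracket at each step to enlarge or shrink the chosen lift so as to literally dominate or be dominated by the finitely many already-lifted elements relevant to each Boolean expression; this bookkeeping is the technical heart of the argument.
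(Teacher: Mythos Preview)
Your object $\underline{a}=\sup_n \underline{b}^{\,a}_n$ is exactly what the paper uses, but the paper notices something you miss: the map $a\mapsto\underline{a}$ is \emph{already} a Boolean homomorphism, so no transfinite recursion is needed at all. Concretely, the paper sets
\[
\varphi(a)=\bigvee\{b\in\algb:\ b\le a\}\in\sigma(\algb),
\]
checks that this supremum exists (it equals your $\underline{a}$: any $b\in\algb$ with $b\le a$ but $b\not\le\underline{a}$ would, by density of $\algb$ in $\sigma(\algb)$, yield $b'\in\algb$ with $0<b'\le b\setminus\underline{a}$, and then $\mu(b'\lor\underline{b}^{\,a}_n)\le\mu(a)$ together with $b'\wedge\underline{b}^{\,a}_n=0$ forces $\mu(b')=0$, contradicting strict positivity), and then verifies $\varphi(f\lor g)=\varphi(f)\lor\varphi(g)$ and $\varphi(f^{c})=\varphi(f)^{c}$ directly. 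Uniform density gives $\mu'(\varphi(a))=\mu(a)$, and strict positivity of $\mu$ makes $\varphi$ injective. That is the entire argument.

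Your recursion is therefore superfluous, and as written it has a genuine gap. At stage $\xi$ you need $\phi(a_\xi)$ to literally dominate $\phi(d)$ for \emph{every} $d\in\alg_\xi$ with $d\le a_\xi$; once $\xi\ge\omega$ there are infinitely many such constraints, so your remark about adjusting within the $\mu'$-null bracket to dominate ``the finitely many already-lifted elements'' does not apply, and you never explain how the bookkeeping is actually carried out. In fact the difficulty dissolves the moment you take the canonical choice $\phi(a_\xi)=\underline{a_\xi}$: if $d\le a_\xi$ then every $b\in\algb$ with $b\le d$ also satisfies $b\le a_\xi$, hence $\underline{d}\le\underline{a_\xi}$ automatically, with no case-by-case adjustment. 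Recognising this monotonicity is exactly what turns your outline into the paper's two-paragraph proof.
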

\begin{proof}
Define a function
$\varphi \colon \alg \to \algc$ in the following way:
\[ 
\varphi(a) =  \bigvee \{b \in \algb \colon b \leq a\}. 
\]

We will prove that this function is a homomorphism.
Everywhere below we assume that $b\in\algb$.
\begin{itemize}
 	\item \textbf{$\lor$}. Observe that 
 		\[ \bigvee\{b\in\algb \colon b \leq f \lor
 		g\} \geq (\bigvee\{b\in\algb\colon b\leq f\}
 		\lor \bigvee\{b\in\algb\colon b\leq
 		g\}).\] Suppose that the above inequality is
 		strict. Then, since $\algb$ is dense in
 		$\sigma(\algb)$, we
 		would find nonzero $x\in \algb$ such that
 		\[ x \leq \varphi(f\lor g) \setminus (\varphi(f) \lor
 		\varphi(g)). \]
		We have $x\land f\ne {\bf 0}$ or $x\land
		g\ne {\bf 0}$ (otherwise, we would have
		$x \land a = {\bf 0}$ for every $a\leq f\lor g$, 
		but this is impossible since $x \leq \varphi(f\lor g)$). 
		Say $x\land f\ne {\bf 0}$.
 		As $\algb$ is dense in $\alg$, there is
 		$b\in\algb$ such that
 		$b\leq x\land f$. Then
 		$b \leq f$ and $b \leq x$, a contradiction.
 	\item \textbf{$^c$}. Similarly, assume that
 		\[ \bigvee\{b'\in \algb\colon b'\leq f^c\}
 		\ne \bigwedge\{b^c\colon b\leq f\}\]
		for some $f\in \alg$.
 		That would mean that there are $b, b'\in \algb$ with
 		$b\leq f$ and $b'\leq f^c$
 		such that either
 		\[ b'\nleq b^c,\]
		or
		\[ b^c \nleq b' \]
 		which is impossible.
\end{itemize}
Since $\algb$ is uniformly $\mu$--dense in $\alg$, we have
\[ \mu(a) = \mu'(\varphi(a)) \]
for each $a\in \alg$. Clearly, $\varphi$ is one--to--one, so $\varphi$ is a metric monomorphism.
\end{proof}

From the above proposition we can deduce Proposition \ref{Cohen}. 
Indeed, if $\alg$ supports a uniformly regular non--atomic measure,
then it has a dense countable family. We can
assume that this family is a subalgebra $\algb$ of $\alg$.
It is necessarily non--atomic and, therefore,
isomorphic to the Cantor algebra $\alg_c$. 
We can thus assume that $\alg_c$ is a dense subalgebra of $\alg$.
But the $\sigma$--completion of $\alg_c$ is the Cohen algebra. 

We will strengthen Proposition \ref{Cohen}.  
First, we introduce a general definition. Let $\mu$ be a measure defined on $\algb$. We
say that the algebra
\[ \JJ_\mu(\algb) = \{a\in \sigma(\algb)\colon \mu_*(a) = \mu^*(a)\} \]
is \emph{a $\mu$--Jordan extension of $\algb$}. Equivalently we can say that $\JJ_\mu(\algb)$ consists of those
Baire subsets of $\mathrm{Stone}(\algb)$ whose boundaries are $\wh{\mu}$--null (where $\wh{\mu}$ is the unique
extension of $\mu$). The measure $\wh{\mu}$ is strictly positive on $\JJ_\mu(\algb)$ and $\algb$ is uniformly $\wh{\mu}$--dense in $\JJ_\mu(\algb)$.
Notice also that $\algb$ is not uniformly $\wh{\mu}$--dense in any subalgebra of $\sigma(\algb)$ which is not 
included in $\JJ_\mu(\algb)$. 

Coming back to uniformly regular measures, the above remarks imply the following generalization of Proposition \ref{Cohen}:

\begin{prop}\label{somemu} Every Boolean algebra supporting a
uniformly regular measure is metrically isomorphic to
a subalgebra of $\JJ_\mu(\alg_c)$ for some $\mu$.
\end{prop}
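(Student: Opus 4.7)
The plan is to reduce to Proposition \ref{general-Cohen} and then transfer the construction to the Cantor algebra. Fix a uniformly regular measure $\nu$ on $\alg$, take a countable uniformly $\nu$-dense family, and close it under Boolean operations to obtain a countable uniformly $\nu$-dense subalgebra $\algb\sub\alg$; set $\nu_0=\nu|_\algb$. Proposition \ref{general-Cohen} provides a metric isomorphism of $(\alg,\nu)$ onto some $(\alg',\wh{\nu_0})$ with $\alg'\sub\sigma(\algb)$ in which $\algb$ remains uniformly $\wh{\nu_0}$-dense. By the remark made just before this proposition---that $\algb$ is not uniformly dense in any subalgebra of $\sigma(\algb)$ outside $\JJ_{\nu_0}(\algb)$---we conclude $\alg'\sub\JJ_{\nu_0}(\algb)$.

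Next I would move $\algb$ into the Cantor algebra. Since $\algb$ is countable, its Stone space is compact metrizable, hence a continuous image of $2^\om$; dualising this surjection yields a Boolean embedding $\iota\colon\algb\hookrightarrow\alg_c$. The push-forward of $\nu_0$ along $\iota$ is a finitely additive measure on $\iota(\algb)$, and any such measure extends (via the standard Hahn--Banach argument) to a finitely additive probability measure $\mu$ on $\alg_c$ with $\mu\circ\iota=\nu_0$.

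It then suffices to lift $\iota$ to a metric embedding $\widetilde{\iota}\colon\JJ_{\nu_0}(\algb)\hookrightarrow\JJ_\mu(\alg_c)$, for composing $\widetilde{\iota}$ with the map from the first paragraph delivers the desired subalgebra of $\JJ_\mu(\alg_c)$. For $a\in\JJ_{\nu_0}(\algb)$ pick approximating sequences $b_n^-\le a\le b_n^+$ in $\algb$ with $\wh{\nu_0}(b_n^+\setminus b_n^-)\to 0$ and set $\widetilde{\iota}(a)=\bigvee_n\iota(b_n^-)$, the supremum taken in the Cohen algebra $\algc=\sigma(\alg_c)$. The inequalities $\iota(b_n^-)\le\widetilde{\iota}(a)\le\iota(b_n^+)$ together with $\mu(\iota(b_n^+)\setminus\iota(b_n^-))\to 0$ place $\widetilde{\iota}(a)$ in $\JJ_\mu(\alg_c)$ and give $\wh\mu(\widetilde{\iota}(a))=\wh{\nu_0}(a)$; independence of $\widetilde{\iota}(a)$ from the choice of approximating sequences follows from the strict positivity of $\wh\mu$ on $\JJ_\mu(\alg_c)$.

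The main obstacle is the routine but technical verification that $\widetilde{\iota}$ is a Boolean homomorphism. This parallels the proof of Proposition \ref{general-Cohen}: preservation of joins and complements is shown by sandwiching $a\lor a'$ and $a^c$ between $\algb$-approximations on both sides and using that two elements of $\JJ_\mu(\alg_c)$ with equal inner $\mu$-measure relative to $\iota(\algb)$ must coincide. Injectivity of $\widetilde{\iota}$ is then immediate from measure preservation together with strict positivity of $\wh{\nu_0}$ on $\JJ_{\nu_0}(\algb)$.
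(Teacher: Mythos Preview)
Your argument is correct, but it takes a longer route than the paper. The paper (implicitly working under the non-atomicity hypothesis that pervades the section, even though the proposition's statement omits it) simply observes that the countable uniformly $\nu$-dense subalgebra $\algb$ must itself be atomless, and is therefore isomorphic to $\alg_c$ by the uniqueness of the countable atomless Boolean algebra. Once $\algb$ is identified with $\alg_c$, the first paragraph of your proof already finishes the job: $\alg'\sub\JJ_{\nu_0}(\algb)=\JJ_\mu(\alg_c)$ with $\mu=\nu_0$. No Stone-duality embedding, no Hahn--Banach extension of the measure, and no lifting $\widetilde{\iota}$ is needed.

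Your detour does buy something: by embedding $\algb$ into $\alg_c$ rather than identifying them, you avoid any appeal to non-atomicity and make the proposition hold exactly as written. The price is the construction and verification of $\widetilde{\iota}$, which---while correct as you sketch it (well-definedness and the homomorphism property both reduce, via the sandwich $\iota(b_n^-)\le\widetilde{\iota}(a)\le\iota(b_n^+)$, to strict positivity of $\wh{\mu}$ on $\JJ_\mu(\alg_c)$)---is considerably more work than the one-line identification $\algb\cong\alg_c$ the paper uses.
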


With $\lambda$ being the Lebesgue measure, the $\lambda$-Jordan extension of $\alg_c$ is in fact the well known Jordan algebra $\JJ$ coming from the
Jordan measure on [0,1], see \cite{Jordan} and \cite{Peano}.

In Theorem \ref{jordan} we will generalize Proposition \ref{somemu}
further by showing that $\JJ_\mu(\alg_c)$, which we will denote from now on by $\JJ_\mu$, is
the same algebra for every non--atomic measure $\mu$ on $\alg_c$ with respect to isomorphism. 
So, in fact every $\JJ_\mu$ is simply the Jordan algebra
$\JJ$. Let us proceed towards the proof.

In our setting all elements of the Cohen algebra are countable
unions of elements of the generating tree $\TTT$. The key tool in
verifying if a given element of $\algc$ is in
$\JJ_\mu$ are the partitions of $\textbf{1}$ into
elements of $\TTT$.

\begin{defi}
We will say that a partition $\{a_n\colon
n\in\om\}\sub \TTT$ of $\textbf{1}$ is \emph{good for $\mu$} (or,
shortly, $\mu$--good) if
\[ \sum_n \mu(a_n) = 1. \]
\end{defi}

\begin{prop} \label{good partitions}
Consider a measure $\mu$ on the Cantor algebra
$\alg_c$. If a partition $(a_n)_n$ is good for $\mu$, then for every
$M\sub \om$
\[\bigvee_{n\in M} a_n \in \JJ_\mu.\]
If $(a_n)_n$ is not good for $\mu$, then for every
infinite co--infinite $M\sub \om$ 
\[\bigvee_{n\in M} a_n \notin \JJ_\mu. \]
\end{prop}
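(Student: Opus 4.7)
The idea is to compute the inner and outer measures of $b:=\bigvee_{n\in M}a_n$ against the partial sums $\nu(N):=\sum_{n\in N}\mu(a_n)$, using the duality $\mu^*(b)=1-\mu_*(\mathbf{1}\setminus b)$ that holds on $\sigma(\alg_c)$ because complementation is a bijection $\{c\in\alg_c:c\ge b\}\to\{d\in\alg_c:d\le \mathbf{1}\setminus b\}$ sending $c\mapsto \mathbf{1}\setminus c$, with $\mu(c)=1-\mu(\mathbf{1}\setminus c)$. The basic lower bound in both cases is $\mu_*(b)\ge \nu(M)$, coming from the finite subjoins $\bigvee_{n\in F}a_n$ with $F\subseteq M$ finite, which lie in $\alg_c$ and sit below $b$ in $\algc$.

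In the good case, applying the same lower bound to $\mathbf{1}\setminus b=\bigvee_{n\notin M}a_n$ gives $\mu_*(\mathbf{1}\setminus b)\ge \nu(\omega\setminus M)$. Goodness means $\nu(\omega)=1$, so by duality $\mu^*(b)\le 1-\nu(\omega\setminus M)=\nu(M)\le\mu_*(b)\le\mu^*(b)$, forcing equality throughout and hence $b\in\JJ_\mu$.

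In the non-good case, $\nu(\omega)=1-\delta$ with $\delta>0$. Now the lower bounds only give $\mu_*(b)+\mu_*(\mathbf{1}\setminus b)\ge 1-\delta$, whereas $b\in\JJ_\mu$ is equivalent to $\mu_*(b)+\mu_*(\mathbf{1}\setminus b)=1$. I would aim to prove the matching upper bounds $\mu_*(b)=\nu(M)$ and $\mu_*(\mathbf{1}\setminus b)=\nu(\omega\setminus M)$; by duality these give $\mu^*(b)-\mu_*(b)=\delta>0$ and hence $b\notin\JJ_\mu$.

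For the upper bound $\mu_*(b)\le\nu(M)$, fix $T\in\alg_c$ with $T\le b$ in $\algc$. Meagerness of $T\setminus\bigcup_{n\in M}[s_n]$ together with the fact that $T\cap [s_n]$ is clopen (meager iff empty) forces $T\cap[s_n]=\emptyset$ for every $n\notin M$. Writing $T$ as a finite disjoint union of basic clopens $[t_j]$, each $t_j$ has no $s_n$ with $n\notin M$ as an extension, so the sub-antichain $\{[s_n]:n\in M,\,s_n\succ t_j\}$ is dense below $[t_j]$ in $\alg_c$ and contributes $\sum_{n\in M,\,s_n\succ t_j}\mu(a_n)\le \nu(M)$ after summing over $j$. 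The main obstacle I anticipate is ruling out any additional $\mu$-mass on the closed nowhere dense remainder $K:=2^\omega\setminus\bigcup_n[s_n]$ that $T$ might enclose; here the co-infiniteness of $M$ is essential, since infiniteness of $\omega\setminus M$ prevents such a $T$ from grabbing a chunk of $K$ (any cone $[t_j]$ large enough to do so would, by the tree structure, also contain $[s_n]$ for some $n\notin M$, contradicting the constraint on $t_j$), and the symmetric argument using infiniteness of $M$ yields the matching upper bound for $\mathbf{1}\setminus b$.
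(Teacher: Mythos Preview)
Your treatment of the good case is correct and is exactly the paper's argument: both reduce to the inequality $\mu_*(b)+\mu_*(\mathbf 1\setminus b)\ge \nu(M)+\nu(\omega\setminus M)=1$, which forces equality and hence $b\in\JJ_\mu$. (The paper records a formula for $\mu_*$ as a supremum over countable clopen decompositions of $c$, but that formula is equivalent to the usual inner measure and the only use made of it is your lower bound via finite subjoins.)

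For the non-good direction there is a genuine gap --- and not one that can be repaired, because the second assertion is false as stated. Take $\mu=\lambda$, fix a closed nowhere dense $C\subseteq[1]$ with $\lambda(C)>0$, and build a partition $(a_n)_n\subseteq\TTT$ of $\mathbf 1$ by writing each of the open sets $[0]\setminus\{0111\cdots\}$ and $[1]\setminus C$ as a countable disjoint union of basic clopen sets. Then $\sum_n\lambda(a_n)=1-\lambda(C)<1$, so the partition is not $\lambda$-good. If $M$ is the (infinite, co-infinite) set of indices of the pieces contained in $[0]$, then $\bigvee_{n\in M}a_n=[0]$ in $\algc$, and $[0]\in\alg_c\subseteq\JJ_\lambda$. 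Your heuristic breaks exactly on the symmetric bound for $\mathbf 1\setminus b$: the single cone $[1]$ sits below $\mathbf 1\setminus b$ in $\algc$, carries the surplus mass $\lambda(C)$ concentrated on $K\cap[1]=C$, yet contains no $a_n$ with $n\in M$, since all of those were placed in $[0]$. So the tree-structure argument you sketch (``a cone grabbing mass from $K$ must contain an $a_n$ from the other colour'') does not go through.

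The paper's proof of this direction is only the phrase ``can be proved in the same way'', so it does not supply the missing step either. For the one application made later in the section --- showing $\JJ_\mu\ne\algc$ --- it suffices to produce \emph{some} infinite co-infinite $M$ with $\bigvee_{n\in M}a_n\notin\JJ_\mu$, and your compactness argument does deliver this weaker conclusion once $M$ is chosen so that every basic clopen neighbourhood of every point of $K$ contains $a_n$'s with indices in $M$ and in $\omega\setminus M$.
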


\begin{proof} Suppose first that $(a_n)_n$ is good for $\mu$.
Notice then that for any element $c\in \algc$ we have
\[ \mu_*(c) = \sup\{\sum_{n\in \om} \mu(b_n)\colon b_n \in
\alg_c, \ (b_n)_n \mbox{ is pairwise disjoint}, \
\bigvee b_n = c \}. \]
To show that $\mu_*(c) = \mu^*(c)$ we have to
prove that $\mu_*(c)+\mu_*(c^c)=1$, but if $c=\bigvee_{n\in M} a_n$,
then 
\[ 1\ge \mu_*(c) + \mu_*(c^c) \ge \sum_{n\in\om}
\mu(a_n) = 1, \]
since $(a_n)_n$ is good.

The second part of the proposition can be proved
in the same way.
\end{proof}

Notice that for every partition $\{a_n\colon n\in\om\}
\sub \TTT$ of $\textbf{1}$ there are measures $\mu$,
$\nu$, for which $(a_n)_n$ is $\mu$--good and not
$\nu$--good. Indeed, 
define the measures on $(a_n)$ by $\mu(a_n)=1/2^{n+1}$ and
$\nu(a_n)=1/2^{n+2}$ for every $n$ and then extend them to strictly positive and
non--atomic measures on $\alg_c$. 
It is also easy to see that for every measure $\mu$ on
$\alg_c$ and $\eps>0$ there is a partition $(a_n)_n$
of $\textbf{1}$ such that $\Sigma_n \mu(a_n) < \eps$. In particular,
there is no measure such that $\JJ_\mu = \algc$
and therefore there is no strictly positive uniformly
regular measure on the Cohen algebra. Of course, the Cohen algebra is Kelley, 
so we have the following

\begin{conclusion}
There are Boolean algebras with a countable dense
set (and, thus, Kelley) which do not support a
uniformly regular measure.
\end{conclusion}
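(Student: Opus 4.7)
The plan is to exhibit the Cohen algebra $\algc$ itself as the witness. Two easy positive facts hold automatically: by definition of the Cohen algebra, $\alg_c$ is a countable dense subalgebra of $\algc$, and $\algc$ is Kelley because the $\sigma$-additive Lebesgue measure on $\alg_c$ extends to a strictly positive measure on $\sigma(\alg_c)=\algc$ (this was already observed in the introduction via Kelley's criterion). All the substantive work therefore goes into ruling out any uniformly regular measure on $\algc$.

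Suppose toward contradiction that $\mu$ is such a measure, witnessed by a countable uniformly $\mu$-dense $\DD\sub\algc$. Since $\algc$ is atomless and $\mu$ is strictly positive, $\mu$ is automatically non-atomic as a measure (any measure-theoretic atom of $\mu$ would split, by atomlessness of $\algc$, into two nonzero pieces each of measure equal to that of the whole, contradicting additivity). I would then replace $\DD$ by the countable Boolean subalgebra it generates; being countable, atomless and still uniformly $\mu$-dense in $\algc$, this subalgebra is isomorphic to $\alg_c$, and after relabelling we may assume $\alg_c$ itself is uniformly $\mu$-dense in $\algc$.

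Now Proposition~\ref{somemu}, obtained from Proposition~\ref{general-Cohen} applied with $\algb=\alg_c$, gives a metric embedding of $(\algc,\mu)$ into $(\JJ_{\mu|_{\alg_c}},\wh{\mu|_{\alg_c}})$. But $\JJ_{\mu|_{\alg_c}}$ is by construction a subalgebra of $\sigma(\alg_c)=\algc$, so the embedding forces $\JJ_{\mu|_{\alg_c}}=\algc$. This contradicts the remark immediately preceding the corollary: since $\mu|_{\alg_c}$ is non-atomic, one finds $\eps<1$ and a partition $(a_n)_{n\in\om}\sub\TTT$ of $\textbf{1}$ with $\sum_n \mu(a_n)<\eps$; such a partition is not $\mu$-good, so by Proposition~\ref{good partitions} the element $\bigvee_{n\in M} a_n$ lies outside $\JJ_{\mu|_{\alg_c}}$ for every infinite co-infinite $M\sub\om$, giving the desired contradiction.

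I foresee no genuine obstacle: the argument is essentially bookkeeping of the statements already established in the section. The only subtlety worth flagging is that strict positivity on the atomless Cohen algebra automatically promotes $\mu$ to a non-atomic measure, which is what lets us invoke the bad-partition construction of the preceding paragraph.
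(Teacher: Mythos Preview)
Your approach is exactly the paper's: exhibit the Cohen algebra $\algc$ (with its countable dense $\alg_c$) and argue that a hypothetical uniformly regular measure $\mu$ on $\algc$ would force $\JJ_{\mu|_{\alg_c}}=\algc$, contradicting the bad-partition observation recorded just before the corollary.

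One step needs repair. From a metric embedding $\algc\hookrightarrow\JJ_{\mu|_{\alg_c}}\subseteq\sigma(\alg_c)=\algc$ you cannot conclude $\JJ_{\mu|_{\alg_c}}=\algc$: the Cohen algebra has plenty of proper subalgebras isomorphic to itself, so an abstract injection proves nothing. What actually gives the equality is that the \emph{specific} map $\varphi(a)=\bigvee\{b\in\alg_c:b\le a\}$ from Proposition~\ref{general-Cohen} is the identity on $\algc$ (your relabelled $\alg_c$, being uniformly $\mu$-dense for a strictly positive $\mu$, is order-dense in the complete algebra $\algc$, so that supremum returns $a$), hence its image is literally all of $\algc$. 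Equivalently, and more directly, invoke the remark preceding Proposition~\ref{somemu} that $\alg_c$ is not uniformly $\wh\mu$-dense in any subalgebra of $\sigma(\alg_c)$ not contained in $\JJ_{\mu|_{\alg_c}}$; applied to the subalgebra $\algc$ itself this gives $\algc\subseteq\JJ_{\mu|_{\alg_c}}$ immediately.

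A smaller point: the detour through non-atomicity is unnecessary, since the paper's bad-partition remark is asserted for \emph{every} measure on $\alg_c$; moreover, what you establish (no measure-theoretic atoms) is a priori weaker, for finitely additive measures, than the paper's notion of non-atomic.
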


Now we will show that a Boolean algebra $\JJ_\mu$ is
isomorphic to $\JJ_\lambda$ for every non--atomic
measure $\mu$. In analogy with what we have done with partitions of unity,
for a set $x\in \JJ_\mu$ we say that $\{a_n\colon n\in\om\} \sub
\TTT$ is a {\em $\mu$--good
partition of $x$} if $\sum_n \mu(a_n) = \wh{\mu}(x)$,
where $\wh{\mu}$ is the unique extension of $\mu$ to $\JJ_\mu$.

\begin{lem} \label{full range}
Let $\mu$ be a non--atomic measure on $\alg_c$ and
$\wh{\mu}$ its (unique) extension to $\JJ_\mu$. For
every $x\in \JJ_\mu$ and every $0<\varepsilon <\mu(x)$ there is 
$j\in \JJ_\mu$ such that $j\leq x$ and $\wh{\mu}(j) = \varepsilon$.
\end{lem}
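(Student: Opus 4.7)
My plan is to construct $j$ as a countable supremum, taken in $\algc$, of pairwise disjoint clopens $c_k \leq x$ with $\sum_k \mu(c_k) = \varepsilon$, and simultaneously to reserve an auxiliary disjoint sequence of clopens $e_k \leq x$, disjoint from every $c_i$, satisfying $\sum_k \mu(e_k) = \wh{\mu}(x) - \varepsilon$. The clopens $(c_k)$ will witness $\mu_*(j) \geq \varepsilon$, while $(e_k)$ together with $x^c$ will witness $\mu_*(j^c) \geq 1 - \varepsilon$; the two inequalities then force $j \in \JJ_\mu$ with $\wh{\mu}(j) = \varepsilon$.

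At stage $k$ of the induction I set $D_k = \bigvee_{i<k}(c_i \vee e_i) \in \alg_c$ and write $\alpha_k = \varepsilon - \sum_{i<k} \mu(c_i)$ and $\beta_k = (\wh{\mu}(x) - \varepsilon) - \sum_{i<k} \mu(e_i)$ for the remaining deficits. Since $D_k \leq x$, the residue $x \wedge D_k^c$ lies in $\JJ_\mu$ with $\wh{\mu}$-measure $\alpha_k + \beta_k$, so by the representation of $\mu_*$ as a sup of $\sum \mu(b_n)$ over disjoint partitions $(b_n) \subseteq \alg_c$ (from the proof of Proposition \ref{good partitions}), a suitable finite initial segment gives $\bar{y}_k \in \alg_c$ below $x \wedge D_k^c$ with $\mu(\bar{y}_k) > \alpha_k$. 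Non-atomicity of $\mu$ on $\alg_c$ makes $\{\mu(z) : z \in \alg_c,\; z \leq \bar{y}_k\}$ dense in $[0, \mu(\bar{y}_k)]$, so I pick $c_k \leq \bar{y}_k$ with $\alpha_k - 2^{-k} < \mu(c_k) < \alpha_k$. Repeating the argument inside $x \wedge (D_k \vee c_k)^c$, whose $\wh{\mu}$-measure still strictly exceeds $\beta_k$, yields $e_k$ disjoint from $D_k \vee c_k$ with $\beta_k - 2^{-k} < \mu(e_k) < \beta_k$. Both $\alpha_k$ and $\beta_k$ then tend to $0$, and the two series have the required sums.

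For the verification, set $j = \bigvee_k c_k$ in $\algc$; then $j \leq x$ and the disjoint clopens $(c_k)$ below $j$ give $\mu_*(j) \geq \sum_k \mu(c_k) = \varepsilon$. On the other hand, each $e_k$ is disjoint from every $c_i$, so by De Morgan $e_k \leq \bigwedge_i c_i^c = j^c$; also $x^c \leq j^c$ since $j \leq x$. Approximating $x^c$ from within by a disjoint clopen family of total $\mu$-mass close to $\mu_*(x^c) = 1 - \wh{\mu}(x)$ and combining it with $(e_k)$, I obtain a disjoint clopen family below $j^c$ of total $\mu$-mass close to $(1 - \wh{\mu}(x)) + \sum_k \mu(e_k) = 1 - \varepsilon$; hence $\mu_*(j^c) \geq 1 - \varepsilon$, and therefore $\mu^*(j) = 1 - \mu_*(j^c) \leq \varepsilon$. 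Combined with $\mu_*(j) \geq \varepsilon$, this gives $j \in \JJ_\mu$ and $\wh{\mu}(j) = \varepsilon$.

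The principal obstacle is ensuring $j \in \JJ_\mu$ rather than merely $j \in \algc$: a naive greedy construction of $(c_k)$ alone delivers $\mu_*(j) \geq \varepsilon$ but leaves $\mu^*(j)$ uncontrolled, so the device of simultaneously building the shadow family $(e_k)$ inside $x$ but disjoint from every $c_i$ is precisely what pins $\mu^*(j)$ down to $\varepsilon$. A secondary technical point is the calibration step using non-atomicity of $\mu$ on $\alg_c$ to fit the measures $\mu(c_k), \mu(e_k)$ within $2^{-k}$ of the target deficits $\alpha_k, \beta_k$.
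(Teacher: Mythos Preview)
Your argument is correct and follows essentially the same idea as the paper's proof. The paper reduces (without loss of generality) to $x=\mathbf{1}$, asserts that non--atomicity yields a $\mu$--good partition $(a_n)_n$ of $\mathbf{1}$ with $\sum_{n\ \mathrm{odd}}\mu(a_n)=\varepsilon$, and then invokes Proposition~\ref{good partitions}; your pair of sequences $(c_k),(e_k)$ is exactly such a $\mu$--good partition of $x$ built explicitly, and your final verification that $\mu_*(j)+\mu_*(j^c)\geq 1$ simply reproves the relevant half of Proposition~\ref{good partitions} inline rather than citing it. The only cosmetic difference is that you handle general $x\in\JJ_\mu$ directly instead of passing to $x=\mathbf{1}$, which makes the write--up longer but avoids justifying the reduction.
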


\begin{proof}
Assume without loss of generality that
$x=\textbf{1}$. Since $\mu$ is non-atomic on $\alg_c$, 
it is easy to see that there is a $\mu$--good
partition $(a_n)_n$ of $\textbf{1}$
such that
\[ \sum_{n \in N} \mu(a_n) = \varepsilon, \]
where $N$ is the set of odd numbers. Then
$j = \bigvee_{n\in N} a_n \in \JJ_\mu$ and, of
course, $\wh{\mu}(j)=\varepsilon$.
\end{proof}

We will use this fact to prove the following
theorem.

\begin{thm} \label{jordan}
For every non--atomic measure $\mu$ on the Cantor
algebra, the $\mu$-Jordan extension algebra $(\JJ_\mu, \mu)$ is (metrically) isomorphic to
$(\JJ,\lambda)$.
\end{thm}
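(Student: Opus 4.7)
The plan is to construct the measure-preserving Boolean isomorphism $\Phi\colon(\JJ_\mu,\wh{\mu})\to(\JJ,\lambda)$ by a back-and-forth argument on countable subalgebras, and then extend to the Jordan hulls via a $\sigma$-completion argument.

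\emph{Back-and-forth.} Enumerate $\alg_c=\{a_n\colon n\in\om\}$, viewed as a subalgebra of both $\JJ_\mu$ (with measure $\wh{\mu}$) and $\JJ$ (with Lebesgue measure $\lambda$). Recursively construct finite Boolean subalgebras $B_n\sub\JJ_\mu$ and $C_n\sub\JJ$ together with measure-preserving isomorphisms $\phi_n\colon B_n\to C_n$, arranging that $a_n$ is adjoined to $B_{2n}$ at even stages and to $C_{2n+1}$ at odd stages. An extension step has the following form: given $\phi_n\colon B_n\to C_n$ and a new element $x$ to be adjoined on one side, each atom $d$ of $B_n$ refines into $d\wedge x$ and $d\wedge x^c$, so one must exhibit a subelement of $\phi_n(d)\in\JJ$ of $\lambda$-measure $\wh{\mu}(d\wedge x)$ (its complement inside $\phi_n(d)$ then has the correct measure automatically, and all these pieces lie in $\JJ$). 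The existence of such a subelement is exactly Lemma \ref{full range} applied to $(\JJ,\lambda)$; the symmetric backward step uses the same lemma inside $(\JJ_\mu,\wh{\mu})$. Setting $B_\infty=\bigcup_n B_n\supseteq\alg_c$ and $C_\infty=\bigcup_n C_n\supseteq\alg_c$, one obtains a measure-preserving Boolean isomorphism $\phi=\bigcup_n\phi_n\colon B_\infty\to C_\infty$ between countable atomless subalgebras.

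\emph{Extension via $\sigma$-completion.} Since $\alg_c\sub B_\infty$ and $\sigma(\alg_c)=\algc$, the $\sigma$-completion of $B_\infty$ computed inside $\algc$ is all of $\algc$; likewise for $C_\infty$. Therefore $\phi$ extends uniquely to a Boolean automorphism $\bar{\phi}\colon\algc\to\algc$. The push-forward $\lambda\circ\bar{\phi}$ is a $\sigma$-additive measure on $\algc$ agreeing with $\wh{\mu}$ on the subalgebra $B_\infty$, so the two measures coincide on all of $\algc$. A routine two-step approximation shows $\JJ_\mu=\JJ_{\wh{\mu}|_{B_\infty}}(B_\infty)$: any element that is $\wh{\mu}$-approximable from $B_\infty$ above and below is also $\mu$-approximable from $\alg_c$, since every element of $B_\infty\sub\JJ_\mu$ is itself $\alg_c$-approximable (the error merely triples). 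Symmetrically $\JJ=\JJ_{\lambda|_{C_\infty}}(C_\infty)$, and because $\bar{\phi}$ takes $B_\infty$ to $C_\infty$ while preserving order and measure, it bijects $\JJ_\mu$ onto $\JJ$. The restriction $\Phi=\bar{\phi}|_{\JJ_\mu}$ is the desired metric isomorphism.

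\emph{Main obstacle.} The crux is the extension step in the back-and-forth, which reduces to splitting a single element of $\JJ$ (or $\JJ_\mu$) into two subelements of prescribed measures summing to the whole; Lemma \ref{full range} is precisely the tool for this. A more subtle point is recognizing that the $\sigma$-completion of $B_\infty$ computed in the ambient algebra $\algc$ coincides with $\algc$ itself, rather than merely being an abstractly isomorphic copy; this is immediate once one notes that $B_\infty$ contains the standard generating Cantor subalgebra of $\algc$.
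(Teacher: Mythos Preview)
Your argument is correct in substance and close in spirit to the paper's, though organized differently. One sentence should be deleted: ``The push-forward $\lambda\circ\bar{\phi}$ is a $\sigma$-additive measure on $\algc$ agreeing with $\wh{\mu}$ on the subalgebra $B_\infty$, so the two measures coincide on all of $\algc$.'' Neither $\lambda$ nor $\wh{\mu}$ descends to a measure on the Cohen algebra $\algc=\mathrm{Borel}(2^\om)/\mathrm{Meager}$ (a comeager null set and the whole space represent the same element), so this assertion is meaningless. Fortunately it is not used: your real reason that $\bar{\phi}$ carries $\JJ_\mu$ onto $\JJ$ is the next paragraph, where you identify $\JJ_\mu=\JJ_{\wh{\mu}|_{B_\infty}}(B_\infty)$ and $\JJ=\JJ_{\lambda|_{C_\infty}}(C_\infty)$ via the triple-$\varepsilon$ approximation, and then observe that a Boolean automorphism of $\algc$ sending $(B_\infty,\wh{\mu})$ to $(C_\infty,\lambda)$ automatically matches the inner and outer measures computed from these subalgebras. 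That part is fine.

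The paper's proof avoids the back-and-forth altogether. It works entirely inside $\JJ_\mu$: using Lemma~\ref{full range} it builds a \emph{dense} Cantor subalgebra $\alg'\sub\JJ_\mu$ (generated by a tree $\{\varphi(s)\colon s\in 2^{<\om}\}$) on which $\wh{\mu}$ is exactly dyadic, $\wh{\mu}(\varphi(s))=2^{-|s|}$. Density below each $d_n\in\alg_c$ is arranged directly during the inductive construction, so $\alg'$ need not contain $\alg_c$. Then $\varphi^{-1}\colon(\alg',\wh{\mu})\to(\alg_c,\lambda)$ is already a metric isomorphism of Cantor algebras, and the paper checks $\JJ_\mu=\JJ_{\wh{\mu}}(\alg')$ using the language of $\mu$-good partitions (Proposition~\ref{good partitions}) rather than your inner/outer approximation; the two verifications are equivalent. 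Your route trades the one-sided ``make $\wh{\mu}$ look dyadic'' construction for a symmetric back-and-forth, at the cost of having to keep track of two growing algebras and passing through the ambient automorphism of $\algc$; the paper's route is a bit leaner but requires the extra density bookkeeping for $\alg'$. Both hinge on the same splitting lemma.
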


\begin{proof}
Let $\mu$ be a measure on $\alg_c$, and let
$\wh{\mu}$
be its extension to $\JJ_\mu$.
We identify the $\alg_c$'s generating tree $\TTT$ with $\cant$ here.
We will find a
subalgebra $\alg'$ of $\JJ_\mu$ such that
\begin{itemize}
 	\item there is an isomorphism $\varphi\colon \alg_c
 		\to \alg'$;
 	\item $\alg'$ is dense in $\JJ_\mu$;
 	\item for every $s\in 2^n$ we have
 		$\wh{\mu}(\varphi(s)) = 1/2^n$.
\end{itemize}

Fix an enumeration $\alg_c\setminus\{\emptyset\}=\{d_n\colon n\in\om\}$. We will
inductively define an isomorphism $\varphi$
between $\TTT$ and a subset of $\JJ_\mu$. 
Let $\varphi(\emptyset)=\textbf{1}$ and let $m_0=0$.
Assume that we have defined $\varphi(s)$ for every
$s\in 2^i$, $i\leq m_n$ in such a way that
$\{\varphi(s)\colon s\in 2^{m_n}\}$ is a partition
of $\mathbf{1}$ which is dense under  
$\{d_1,\dots,d_n\}$ and $\wh{\mu}(\varphi(s))=1/2^i$ for
$s\in 2^i$.

For some  $s\in 2^{m_n}$ we have $\varphi(s) \land d_{n+1} \ne
\textbf{0}$. Using Lemma \ref{full range} we can find an
element $y$ of $\JJ_\mu$ such that
\begin{itemize}
 	\item $\wh{\mu}(y) = 1/2^l$ for some $l>m_n$;
 	\item $y \leq \varphi(s) \land d_{n+1}$.
\end{itemize}
Now define $\varphi$ on $2^l$ in
such a way that 
\begin{itemize}
 	\item  $\varphi(t)=y$ for some $t\in 2^l$
 		extending $s$;
 	\item $\wh{\mu}(\varphi(s)) = 1/2^l$ 
        for $s\in 2^l$;
 	\item $\{\varphi(s)\colon s\in 2^l\}$ is a
 		partition of $\mathbf{1}$ refining
 		$\{\varphi(s)\colon s\in 2^{m_n}\}$. 
\end{itemize}
Put $m_{n+1}=l$. Define $\alg'$ to be the algebra generated by 
$\{\varphi(s)\colon s\in\cant\}$ and notice that
$\alg'$ is dense in $\alg$ and
$\wh{\mu}(\varphi(s))=1/2^{|s|}$ for $s\in \cant$.
Now, consider the Jordan extension
$\JJ'_{\wh{\mu}}$  of $\alg'$. 
\medskip

CLAIM: $\JJ_\mu = \JJ'_{\wh{\mu}}$. 
\medskip

It is enough to show that every $\mu$-good partition 
of $\textbf{1}$ into elements of the generating
tree $\TTT$ can be refined
to a $\mu'$--good partition of $\textbf{1}$ into elements of 
$\{\varphi(s)\colon s\in\cant\}$ and vice versa.

Let $\{a_n\colon n\in\om\}\sub \cant$ be a
$\mu$--good partition of $\textbf{1}$. Then, using
the fact that $\alg'$ is dense under $\alg$, for 
every $n$ we can find a $\mu'$--good partition $\{b^n_m\colon
m\in\om\} \sub \{\varphi(s)\colon s\in \cant\}$ of $a_n$. Thus, $\{b^n_m\colon
m,n\in\om\}\sub \{\varphi(s)\colon s\in \cant\}$ is a
$\mu'$--good partition of $\textbf{1}$. 

The reverse implication can proved in the same
way. $\Box_{\mbox{{\em Claim}}}$
\medskip

Now, we want to show that there is a metric
isomorphism $\psi\colon \JJ_\mu \to \JJ$.
For $s\in \TTT$ define
\[ \psi( \varphi(s) ) = s. \]
Of course $\psi = \varphi^{-1}$ so it is
an isomorphism. Additionally
$\wh{\mu}(s) =
1/2^{|s|} = \lambda(s)$, so $\psi$ is measure
preserving. We can extend $\psi$ to 
\[ \psi\colon \JJ'_{\wh{\mu}} \to \JJ. \] 
But $\JJ'_{\wh{\mu}} = \JJ_\mu$, so we are
done.
\end{proof}

The Maharam theorem implies that every
complete Boolean algebra supporting a non--atomic $\sigma$--additive
separable measure is metrically isomorphic to the Random
algebra with the Lebesgue measure. 
Theorem \ref{jordan} gives us an analogous
theorem for uniformly regular measures. There is
no \emph{complete} Boolean algebra supporting a uniformly
regular measure, but we can see a property
of being isomorphic to the Jordan algebra as a property of 
\emph{being as close to completeness as possible
without loosing uniform regularity}.

Theorem \ref{jordan} allows us to complete the proof of Theorem \ref{characterization}.

\begin{proof}(of Theorem \ref{characterization})
Assume $\alg$ supports a non--atomic uniformly regular measure $\mu$.
By Proposition \ref{Cohen} $(\alg,\mu)$ is metrically isomorphic to a 
subalgebra of $\JJ_\mu$ with $\wh{\mu}$, the unique extension of $\mu$ to $\sigma(\alg)$. 
By Theorem \ref{jordan} $\JJ_\mu$ is metrically isomorphic to $(\JJ,\lambda)$, so $(\alg, \mu)$ is metrically isomorphic
to a subalgebra of $\JJ$ with $\lambda$. 

Assume now that a Boolean algebra $\alg$ is a subalgebra of $\JJ$ and that it has a dense Cantor subalgebra $\algb$.
Then $\algb$ is uniformly $\lambda$--dense in $\alg$, so $\alg$ supports a uniformly regular measure.
\end{proof}

Because of the complexity of non $\sigma$--complete Boolean algebras, 
it seems that every characterization here inevitably has to 
involve subalgebras, like in Theorem \ref{characterization}.
So, we can only hope to fully characterize Boolean algebras supporting measures which are
\emph{maximal} with respect to some property. 

The fact analogous to the first part of Theorem \ref{characterization} for separable measures is an
easy application of Maharam theorem.

\begin{prop} \label{Mahar}
Assume a Boolean algebra $\alg$ supports a non--atomic separable measure $\mu$. 
Then $(\alg, \mu)$ is metrically isomorphic to a subalgebra of the Random algebra with the Lebesgue measure.
\end{prop}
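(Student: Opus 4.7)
The plan is to reduce the statement to Maharam's theorem by passing to the metric completion of $(\alg,d_\mu)$. First, since $\mu$ is strictly positive, the Fr\'echet--Nikodym metric $d_\mu$ is a genuine metric on $\alg$. I would form the metric completion $(\wh{\alg}, \wh{d})$ and observe, as is standard, that the Boolean operations $\lor,\land,{}^c,\bigtriangleup$ are uniformly continuous with respect to $d_\mu$, so they extend to $\wh{\alg}$ and turn it into a Boolean algebra into which $\alg$ embeds as a dense subalgebra. The measure $\mu$, being $1$-Lipschitz, likewise extends to a strictly positive finitely additive measure $\wh{\mu}$ on $\wh{\alg}$, and the inclusion $\alg\hookrightarrow\wh{\alg}$ is a metric monomorphism.

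Next I would check that $\wh{\alg}$ is $\sigma$-complete and $\wh{\mu}$ is $\sigma$-additive. For a countable sequence $(a_n)$ in $\wh{\alg}$ the partial unions $\bigvee_{n\leq N}a_n$ form a $\wh{d}$-Cauchy sequence, since $\wh{\mu}$ is finite; its limit in $\wh{\alg}$ serves as a supremum, and countable additivity follows from the fact that $\wh{\mu}$ is continuous in $\wh{d}$. Separability of $(\wh{\alg},\wh{d})$ is immediate from separability of $\mu$ on $\alg$, and non-atomicity transfers from $\alg$ to $\wh{\alg}$: given $\varepsilon>0$, a finite partition of $\textbf{1}$ in $\alg$ into pieces of $\mu$-measure at most $\varepsilon$ remains such a partition in $\wh{\alg}$.

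Now Maharam's theorem applies to the $\sigma$-complete measure algebra $(\wh{\alg},\wh{\mu})$, which is separable, non-atomic, strictly positive, and has total mass one. In this situation the Maharam classification gives that $(\wh{\alg},\wh{\mu})$ is metrically isomorphic to the measure algebra of $(2^\om,\lambda)$, that is, to the Random algebra $\mathfrak{R}$ equipped with the Lebesgue measure. Composing the inclusion $\alg\hookrightarrow\wh{\alg}$ with this isomorphism yields a metric embedding of $(\alg,\mu)$ into $(\mathfrak{R},\lambda)$, as required.

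The main technical point is the verification that the metric completion of $(\alg,d_\mu)$ is again a Boolean algebra and that the extended measure is $\sigma$-additive on it; this is routine but must be done carefully since $\alg$ itself is not assumed $\sigma$-complete. Once that is in place, the separable non-atomic hypothesis on $\mu$ plugs directly into Maharam's theorem and no further work is needed.
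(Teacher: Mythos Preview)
Your argument is correct and arrives at the same destination as the paper, but via a different construction of the ambient measure algebra. The paper passes through Stone duality: it extends $\mu$ to the (unique) $\sigma$-additive measure $\wh{\mu}$ on $\Borel(\mathrm{Stone}(\alg))$, forms the quotient by $\wh{\mu}$-null sets, and then invokes Maharam. You instead build the ambient algebra abstractly as the metric completion of $(\alg,d_\mu)$ and verify directly that it is a $\sigma$-complete Boolean algebra on which the extended measure is $\sigma$-additive. The two constructions produce canonically isomorphic objects---the metric completion of a measure algebra is precisely the Borel algebra of its Stone space modulo null sets---so the difference is one of packaging. The paper's route is shorter because the extension theorem for measures on Stone spaces and the quotient construction are taken off the shelf; your route is more self-contained, trading those black boxes for the routine but nontrivial verification that Boolean operations and the measure pass to the completion with the right properties. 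One small point you leave implicit: to apply Maharam you want $\wh{\alg}$ to be a (Dedekind) complete measure algebra, not merely $\sigma$-complete; this follows because the strictly positive measure forces ccc, and ccc together with $\sigma$-completeness gives completeness.
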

\begin{proof}
Consider the (unique) extension $\wh{\mu}$ of $\mu$ to $\algb = \Borel(\mathrm{Stone}(\alg))$. 
It is non--atomic and separable, so the measure $\wh{\mu}$ defined on $\algb/\{b\colon \wh{\mu}(b)=0\}$ is, by the Maharam theorem, 
metrically isomorphic to the Lebesgue measure on the Random algebra. Since $\mu$ is strictly positive, 
the function $\varphi\colon \alg \to \algb/\{b\colon \wh{\mu}(b)=0\}$ defined by
\[ \varphi(a) = [a]_{\wh{\mu}} \]
is a monomorphism. 
\end{proof}

However, we cannot conclude from the above that every subalgebra of the Random algebra
supports a non--atomic separable measure (${\rm Free(\om_1)}$ is one of the counterexamples).
It is also unclear what assumption (analogous to containing a dense Cantor subalgebra in Theorem \ref{characterization}) 
should be added to Proposition \ref{Mahar} to obtain a characterization of algebras supporting non--atomic separable measures.
Thus, uniform regularity seems to be a more convenient notion for our purposes. 

\section{Higher cardinal versions of uniform regularity}\label{higher}

The positive classification results that we obtained when replacing separability by uniform regularity motivate us to consider higher cardinal versions of uniform regularity. The results that we have on this subject are preliminary and the further research is planned in the future. Nevertheless, the partial results we do have seem worth mentioning in this brief section.

For a measure $\mu$ on
a Boolean algebra $\algb$ we can define the cardinal number
\[
\ur(\mu)=\min\{\kappa:\mbox{there is a uniformly }\mu-\mbox{dense family }\FF\subseteq\algb
\mbox{ with }|\FF|=\kappa\}.
\]
This is a well defined cardinal invariant since $\algb$ is uniformly $\mu$-dense in $\algb$. Moreover, if the algebra is atomless, this cardinal invariant is always an infinite cardinal. In relation with known cardinal invariants of Boolean algebras
and measures, notice that clearly ${\rm ur} (\mu)$ is $\ge$ than the Maharam type of $\mu$ and also than the pseudoweight $\pi(\algb)$, which is defined as the smallest cardinality of a set $A$ of positive elements in $\algb$ such that for all $b\in
\algb \setminus\{ {\bf 0}\}$ there is
$a\in A$ such $a\le b$.

\begin{lemma}\label{homog} Suppose that $\mu$ is a non-atomic strictly positive measure on a Boolean algebra $\algb$. Then there is a partition of unity $\{a_n\colon \,n<\omega\}$ such that for every
$n$ and every ${\bf 0}\neq b\le a_n$ we have that $\ur(\mu\, |\, b)=\ur(\mu\, | \,a_n)$.
\end{lemma}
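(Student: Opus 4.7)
The plan is to locate a dense family of \emph{homogeneous} elements of $\algb$ (those $a$ for which $\ur(\mu|b)$ is constant on nonzero $b \leq a$) and then extract from it a countable maximal antichain, which will be the desired partition.

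The technical engine would be a monotonicity principle: whenever $\mathbf{0} \neq c \leq a$, one has $\ur(\mu|c) \leq \ur(\mu|a)$. I would verify this by taking a uniformly $\mu$-dense $\FF \sub \algb|a$ of cardinality $\ur(\mu|a)$ and observing that $\{f \wedge c : f \in \FF\}$ is uniformly $\mu$-dense in $\algb|c$: given $d \leq c$ and $\eps > 0$, uniform $\mu$-density of $\FF$ in $\algb|a$ (applicable since $d \leq a$) produces $f \in \FF$ with $f \leq d$ and $\mu(d \setminus f) < \eps$, and $f \leq d \leq c$ forces $f \wedge c = f$. Since cardinals are well-ordered, for every nonzero $a$ the value $\kappa(a) := \min\{\ur(\mu|b) : \mathbf{0} \neq b \leq a\}$ is attained, and any realiser $b \leq a$ is automatically homogeneous: for $\mathbf{0} \neq c \leq b$ monotonicity gives $\ur(\mu|c) \leq \ur(\mu|b) = \kappa(a)$ while the definition of $\kappa(a)$ supplies the reverse inequality. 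Non-atomicity of $\mu$ makes all these $\ur$-values infinite so that the cardinal arithmetic is painless. This shows that homogeneous elements are dense below every nonzero element of $\algb$.

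I would then apply Zorn's lemma to choose a maximal antichain $\mathcal{A}$ of homogeneous elements; strict positivity of $\mu$ gives ccc on $\algb$, so $\mathcal{A}$ is countable, say $\mathcal{A} = \{a_n : n < \om\}$. The final step, which I expect to be the main conceptual point, is to verify that $\{a_n\}$ is a genuine partition of unity in $\algb$, since this is not automatic in a non-$\sigma$-complete algebra. If some nonzero $b \in \algb$ were disjoint from every $a_n$, density of homogeneous elements would yield a nonzero homogeneous $h \leq b$ still disjoint from all of $\mathcal{A}$, contradicting maximality. Hence every nonzero element of $\algb$ meets some $a_n$; applied to $c^c$ for any upper bound $c$ of $\mathcal{A}$, this forces $c^c = \mathbf{0}$, so $\mathbf{1}$ is the unique upper bound of $\mathcal{A}$ and therefore $\bigvee_n a_n = \mathbf{1}$, as required.
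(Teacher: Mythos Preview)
Your proposal is correct and follows essentially the same route as the paper: show that the uniform (you say ``homogeneous'') elements are dense, take a maximal antichain among them, and invoke ccc. The only presentational differences are that the paper argues density by contradiction (if no uniform element lies below $a$, iterate to produce an infinite strictly decreasing sequence of cardinals), whereas you take the minimum directly via well-ordering and make the underlying monotonicity $\ur(\mu|c)\le\ur(\mu|a)$ explicit; and you add a verification that the maximal antichain really has join $\mathbf{1}$, which the paper leaves implicit.
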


\begin{proof} Let us call an element $a\in B$ {\em uniform} if for every
	${\bf 0}\neq b\le a$ we have that $\ur(\mu\, |\, b)=\ur(\mu\, | \,a)$. We claim that the set of uniform elements is dense in $\algb$. If not, we can find a sequence $\{b_n\colon \,n<\omega\}$
of elements of $\algb$ such that $\ur(\mu\, |\, b_n)>\ur(\mu\, | \,b_{n+1})$, which gives an infinite decreasing sequence of cardinals. Now it suffices to take a maximal antichain in the family of uniform elements. Such an antichain must be countable, by the ccc property of the algebra, and it must be a
partition of unity by the density of the set of uniform elements.
\end{proof}

The above Lemma parallels the reduction in Maharam's theorem to homogeneous measures.
We shall say that a measure $\mu$ on an algebra $\algb$ is {\em uniformly $\kappa$--regular} if
${\bf 1}$ is a uniform element and $\ur(\mu)=\kappa$. We would like to classify pairs $(\algb,\mu)$ where $\mu$ is a uniformly $\kappa$-regular
measure on $\algb$, for various $\kappa$. For $\kappa=\aleph_0$ we have already determined that
these pairs are exactly subalgebras of the Jordan algebra with the Lebesgue measure. 

We need to introduce higher analogues of the Jordan algebra.


Let us denote by $\lambda_\kappa$ the usual product measure on $2^\kappa$ and by
$\alg_\kappa$ the clopen algebra of  $2^\kappa$. We shall also denote by $\TT_\kappa$
the family of basic clopen sets in $2^\kappa$, which can be identified with the free algebra on
$\kappa$ generators.
Then Kakutani's theorem says that $\lambda_\kappa$ is obtained as an extension of the measure
on $\TT_\kappa$ which to each basic clopen set of the form $[s]$ assigns $1/2^{|{\rm dom}(s)|}$.
The point is that all Borel sets are indeed measurable when we extend the measure, even though the
$\sigma$-completion of $\alg_\kappa$ only gives us the Baire algebra $\algb_\kappa$, which in general does not contain all Borel subsets of $2^\kappa$. 
We obtain the same algebra if we work with $[0,1]^\kappa$ in place of $2^\kappa$, and we denote both of these measures as $\lambda_\kappa$.

Now we shall define analogues of the Jordan algebra. For any $\kappa$ let 
$\JJ^\kappa$  be the algebra of
those open sets in $[0,1]^\kappa$ whose boundaries have the $\lambda_\kappa$ measure $0$, so
$\JJ^\omega=\JJ$. In other words, $\JJ^\kappa = \JJ_{\lambda_\kappa}({\rm Free}(\kappa))$. We might hope to prove that a  Boolean algebra supports a uniformly $\kappa$--regular measure if and only if it is isomorphic to a subalgebra of the algebra $\JJ^\kappa$ containing a dense copy of some fixed nice algebra $\alg_\kappa^\ast$. Note however that certainly this hypothetic
$\alg_\kappa^\ast$ cannot be $\alg_\kappa$, since as soon as a Boolean algebra contains a copy of $\TT_\kappa$,
it has an independent sequence of length $\kappa$ and hence it supports a measure of type $\kappa$.
However, for $\kappa>\omega$ there may be Boolean algebras $\algb$ that do not support a measure of type $\kappa$ but all measures $\mu$ on $\algb$ satisfy ${\rm ur}(\mu)\ge\kappa$, see Section
\ref{separable} for examples. It is in fact more reasonable to widen our acceptance criterion from a fixed 
$\alg_\kappa^\ast$ to a quotient of $\alg_\kappa$, as suggested by the following observation which generalizes Proposition \ref{Cohen}. 

\begin{prop}\label{Cohenkappa} If a Boolean algebra supports a uniformly $\kappa$-regular (non-atomic) measure, then it is isomorphic to a subalgebra of the $\sigma$-completion of a quotient of ${\rm Free}(\kappa)$.
\end{prop}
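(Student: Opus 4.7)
The plan is to follow the same template as the proof of Proposition \ref{Cohen}, substituting $\kappa$ for $\aleph_0$ and using Proposition \ref{general-Cohen} as the key tool.

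First I would unpack the hypothesis: since $\alg$ supports a uniformly $\kappa$-regular measure $\mu$, by the definition of $\ur(\mu)=\kappa$ there exists a uniformly $\mu$-dense family $\FF\subseteq\alg$ with $|\FF|=\kappa$. Replace $\FF$ by the Boolean subalgebra $\algb\subseteq\alg$ that it generates. Since Boolean combinations of a set of cardinality $\kappa$ (with $\kappa$ infinite) form a set of cardinality at most $\kappa$, we still have $|\algb|=\kappa$; and uniform $\mu$-density is inherited upward from $\FF$ to $\algb$ because the defining property of uniform density is existential in the dense family. Non-atomicity of $\mu$ passes to $\algb$ as well, though this is not actually needed for the conclusion.

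Next, a Boolean algebra of cardinality $\kappa$ is generated, as a Boolean algebra, by at most $\kappa$ elements, so by the universal property of the free Boolean algebra there is a surjective homomorphism $\pi\colon{\rm Free}(\kappa)\to\algb$. Setting $I=\ker\pi$, we obtain $\algb\cong{\rm Free}(\kappa)/I$, and consequently $\sigma(\algb)\cong\sigma({\rm Free}(\kappa)/I)$, which is literally the $\sigma$-completion of a quotient of ${\rm Free}(\kappa)$.

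Finally I would apply Proposition \ref{general-Cohen} to the triple $(\alg,\algb,\mu)$: since $\algb$ is uniformly $\mu$-dense in $\alg$, that proposition produces a metric monomorphism of $(\alg,\mu)$ into $(\sigma(\algb),\mu')$, where $\mu'$ is the unique extension of $\mu\!\upharpoonright\!\algb$. Composing with the isomorphism $\sigma(\algb)\cong\sigma({\rm Free}(\kappa)/I)$ identifies $\alg$ with a subalgebra of the $\sigma$-completion of a quotient of ${\rm Free}(\kappa)$, as required. There is no real obstacle in this argument; it is essentially a bookkeeping extension of Proposition \ref{Cohen}, the only point that merits explicit mention being that we no longer have $\algb\cong{\rm Free}(\kappa)$ itself (as we did when $\kappa=\aleph_0$ and $\algb$ was forced to be the Cantor algebra by non-atomicity), which is precisely why the conclusion must be weakened to allow an arbitrary quotient.
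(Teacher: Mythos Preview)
Your proposal is correct and follows essentially the same approach as the paper: take a uniformly $\mu$-dense subalgebra $\algb$ of size $\kappa$, observe it is a quotient of ${\rm Free}(\kappa)$, and then invoke Proposition~\ref{general-Cohen} to embed $\alg$ into $\sigma(\algb)$. Your remark that non-atomicity no longer forces $\algb\cong{\rm Free}(\kappa)$, and that this is exactly why the conclusion must be weakened to a quotient, is precisely the point the paper makes.
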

\begin{proof} Let $\alg$ be a Boolean algebra that supports a uniformly $\kappa$-regular measure and let $\algb$ be a uniformly $\mu$-dense subset of $\alg$ size $\kappa$. We may assume that $\algb$ is a Boolean algebra. Therefore $\algb$ is
	isomorphic to a quotient $\algb'$ of ${\rm Free}(\kappa)$. Let $\algc'$
be the $\sigma$-completion of $\algb'$, we shall show that $\alg$ is isomorphic to a subalgebra of 
$\algc'$. To do this, we use exactly the same definition of $f$ as in the proof of Proposition
\ref{Cohen} and  the proof that $f$ is a homomorphism remains the same.
\end{proof}

If we wish to generalise further, we need to develop the analogues of the uniqueness of the Jordan
algebra. The proof we had in the separable case rested upon the uniqueness of the Cantor algebra. 
In the higher-dimensional case we cannot hope for that, but perhaps we can obtain uniqueness restricted to algebras that have the same dense set. 
This research brings us out of the scope of the present article
and we plan it for the future work.

\section{Separability versus uniform regularity} \label{separable} 

In Section \ref{uniform regularity} we presented a
characterization of Kelley algebras supporting
uniformly regular measures. Finding a characterization of Kelley algebras
supporting separable measures seems to be a more
difficult task (see e.g. \cite{DzP}) as well as finding a characterization of Kelley algebras
carrying only separable measures. The latter is at least possible under $\mathsf{MA}\mathrm{(\om_1)}$:

\begin{thm}{\em (\cite[Theorem 9]{Fremlin})} \label{Fremlin-char}
If a Boolean algebra carries only separable measures, then it is small.
Under $\mathsf{MA}\mathrm{(\om_1)}$ the converse implication holds.
\end{thm}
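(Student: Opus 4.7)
The statement splits into two implications, one in ZFC and one under MA$(\om_1)$. The ZFC direction is that if $\alg$ is big then it carries some non-separable measure (contrapositive of the first sentence); the MA$(\om_1)$ direction is its converse, again by contrapositive: every non-separable measure on $\alg$ forces $\alg$ to be big. I would handle the two directions separately.

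For the ZFC direction, suppose $\alg$ contains $\alg_0 \cong {\rm Free}(\om_1)$, generated by independent $\{x_\alpha : \alpha < \om_1\}$. Put on $\alg_0$ the natural product measure $\mu_0$ under which the $x_\alpha$ are mutually stochastically independent with $\mu_0(x_\alpha)=1/2$. Then $d_{\mu_0}(x_\alpha, x_\beta) = 1/2$ whenever $\alpha \ne \beta$, so $(\alg_0, d_{\mu_0})$ has an uncountable $(1/2)$-separated set and $\mu_0$ is already non-separable. I then extend $\mu_0$ to a finitely additive probability measure $\mu$ on $\alg$ by the classical extension theorem for finitely additive measures on a Boolean subalgebra (a Hahn--Banach type argument applied to the positive linear functional associated with $\mu_0$). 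The same $(1/2)$-separated family lives in $(\alg, d_\mu)$ and witnesses non-separability of $\mu$, so $\alg$ carries a non-separable measure.

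For the MA$(\om_1)$ direction, let $\mu$ be a non-separable measure on $\alg$. Non-separability of $(\alg, d_\mu)$ together with Zorn's lemma yields $\varepsilon > 0$ and an uncountable $\varepsilon$-separated family $\{a_\alpha : \alpha < \om_1\} \subseteq \alg$ with $\mu(a_\alpha \bigtriangleup a_\beta) \ge \varepsilon$ for $\alpha \ne \beta$. A pigeonhole on the values $\mu(a_\alpha) \in [0,1]$ lets me thin to an uncountable subfamily with $\mu(a_\alpha) \in [\delta, 1-\delta]$ for a fixed $\delta > 0$. The goal is then to extract, using MA$(\om_1)$, an uncountable $X \subseteq \om_1$ for which $\{a_\alpha : \alpha \in X\}$ is \emph{Boolean-independent} in $\alg$; such a family generates a copy of ${\rm Free}(\om_1)$ inside $\alg$ and makes $\alg$ big.

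The main obstacle is the MA-driven refinement to an independent subfamily. The plan is to consider the poset $P$ whose conditions are finite $F \subseteq \om_1$ such that $\{a_\alpha : \alpha \in F\}$ is Boolean-independent, ordered by reverse inclusion. For each $\gamma < \om_1$ the set $D_\gamma = \{F \in P : F \setminus \gamma \neq \emptyset\}$ is dense, which is the easy part and follows from the $\varepsilon$-separation together with the uniform measure bound. The delicate step is verifying that $P$ is ccc. The intended approach applies the $\Delta$-system lemma to a hypothetical uncountable antichain, reducing to conditions $F_\xi = R \cup S_\xi$ with common root $R$ and pairwise disjoint $S_\xi$, and then runs a finite-dimensional measure-theoretic argument on the $2^{|R|}$-piece partition of unity induced by $R$, exploiting the uniform bounds $\mu(a_\alpha) \in [\delta,1-\delta]$ and $\mu(a_\alpha \bigtriangleup a_\beta) \ge \varepsilon$ to show that two sufficiently generic $F_\xi, F_\eta$ must amalgamate into a condition, contradicting the antichain assumption. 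Granted ccc, MA$(\om_1)$ produces a filter meeting every $D_\gamma$, whose union $X$ is the desired uncountable Boolean-independent subset.
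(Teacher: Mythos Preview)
The paper does not give its own proof of this theorem; it is simply quoted from Fremlin with a citation. So there is nothing to compare your argument against, and I can only assess it on its own merits.

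Your ZFC direction is fine: the product measure on ${\rm Free}(\om_1)$ is non-separable, and any finitely additive probability measure on a subalgebra extends to the whole algebra; the $\varepsilon$-separated family survives the extension.

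The MA$(\om_1)$ direction, however, has a real gap, and it is precisely where you say things are easy. You claim that the sets $D_\gamma=\{F\in P: F\setminus\gamma\neq\emptyset\}$ are dense in your poset $P$ of finite Boolean-independent subfamilies, and that this ``follows from the $\varepsilon$-separation together with the uniform measure bound''. It does not. Given a finite independent $F$, extending $F$ by some $a_\beta$ with $\beta\ge\gamma$ requires that $a_\beta$ split \emph{every} atom of the finite subalgebra generated by $\{a_\xi:\xi\in F\}$. The $\varepsilon$-separation only tells you that $\mu(a_\beta\bigtriangleup a_\xi)\ge\varepsilon$ for each $\xi\in F$; it says nothing about how $a_\beta$ meets a particular atom $b$ of that subalgebra, especially if $\mu(b)=0$ (recall $\mu$ need not be strictly positive, and even if the $a_\xi$'s are Boolean-independent their atoms can have measure zero). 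A simple obstruction: if it happens that all but countably many $a_\alpha$ lie below some fixed $a_{\alpha_0}$, then no condition containing $\alpha_0$ can ever be extended inside $P$ to meet $D_\gamma$ for large $\gamma$. Nothing in your thinning rules this out. The ccc argument you sketch has the same defect: amalgamating two $\Delta$-system pieces into a single independent family again requires splitting all atoms, and the measure inequalities alone do not force that.

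In short, the poset of finite Boolean-independent sets is the wrong poset; $\varepsilon$-separation in the Fr\'echet--Nikodym metric is a measure-theoretic condition and does not by itself control Boolean independence. Fremlin's actual argument proceeds differently (working through the Stone space and the Maharam structure of the associated measure algebra, then lifting back), and the role of MA is not the naive forcing you describe. If you want to salvage a forcing approach, you need conditions that encode approximate measure-theoretic independence (control of $\mu$ on all Boolean combinations, not just nonvanishing), and that is where the genuine work lies.
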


However, consistently there are small Boolean algebras carrying a non--separable measure
(see e.g. the literature listed in \cite{Todorcevic} after Theorem 6.4). 

The natural question is if one can
use the result from the previous section to get
some information about properties of Kelley
algebras supporting only separable measures.
Some connections of uniform
regularity and separability are obvious: e.g.
uniformly regular measures are separable. 
The following fact indicates that 
there are some more subtle relationships at work.

\begin{thm}{\em (\cite[Theorem 4.6]{Pbn})}\label{alternative}
Every Boolean algebra carries either a
non--separable measure or a measure which is
uniformly regular. \end{thm}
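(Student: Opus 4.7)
The plan is to prove the contrapositive: if $\algb$ carries no uniformly regular measure, construct a non--separable one on $\algb$.

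The driving observation is that for any countable subalgebra $\mathfrak{D} \subseteq \algb$ and measure $\nu$ on $\mathfrak{D}$, the inner approximation
\[
\nu_*(b) = \sup\{\nu(a) : a \in \mathfrak{D},\ a \le b\}, \qquad b \in \algb,
\]
is finitely additive on $\algb$ iff $\nu_*(b) + \nu_*(b^c) = 1$ for every $b$, in which case $\mathfrak{D}$ is uniformly $\nu_*$--dense and hence $\nu_*$ is uniformly regular. Under our hypothesis this never happens, so for every pair $(\mathfrak{D}, \nu)$ there is a \emph{gap witness} $b \in \algb$ with $\nu_*(b) + \nu_*(b^c) \le 1 - \eps$ for some $\eps = \eps(\mathfrak{D}, \nu) > 0$.

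I would then build by recursion of length $\om_1$ a continuous increasing chain of countable subalgebras $\algb_\alpha \subseteq \algb$ with coherent measures $\mu_\alpha$. At a successor $\alpha+1$, apply the observation to $(\algb_\alpha, \mu_\alpha)$, obtain a witness $b_\alpha$ with gap $\eps_\alpha$, adjoin $b_\alpha$, and extend $\mu_\alpha$ to $\mu_{\alpha+1}$ by placing $\mu_{\alpha+1}(b_\alpha)$ at the midpoint of the gap interval $[(\mu_\alpha)_*(b_\alpha),\, 1 - (\mu_\alpha)_*(b_\alpha^c)]$. A consistent extension exists by a standard Hahn--Banach / Horn--Tarski argument using the super--additivity of $(\mu_\alpha)_*$. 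This midpoint placement forces $b_\alpha$ to lie at Fr\'echet--Nikodym distance at least $\eps_\alpha/2$ from every element of $\algb_\alpha$ comparable with $b_\alpha$; extending the argument to incomparable elements requires using the joint super--additive inequalities $(\mu_\alpha)_*(c \wedge b_\alpha) + (\mu_\alpha)_*(c \wedge b_\alpha^c) \le (\mu_\alpha)_*(b_\alpha)$ and its dual. At limit stages take unions, obtaining $\mu_{\om_1}$ on $\algb_{\om_1} = \bigcup_{\alpha<\om_1}\algb_\alpha$; distances are preserved under later extensions, so each $b_\alpha$ is still $\eps_\alpha/2$--far from $\algb_\alpha$ in $(\algb_{\om_1}, d_{\mu_{\om_1}})$.

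By Fodor's pressing--down lemma, a stationary $S \subseteq \om_1$ satisfies $\eps_\alpha \ge \eps$ uniformly for some $\eps > 0$. Any countable subset of $\algb_{\om_1}$ lies in some $\algb_\gamma$ with $\gamma < \om_1$, and any $\beta \in S$ with $\beta > \gamma$ provides a $b_\beta$ that is $\eps/2$--far from all of it; so $\mu_{\om_1}$ is non--separable. Finally extend $\mu_{\om_1}$ from $\algb_{\om_1}$ to $\algb$ by the Horn--Tarski extension theorem, which preserves the uncountable $\eps/2$--separated family and hence non--separability. The main obstacle is the successor--step distance estimate for elements of $\algb_\alpha$ incomparable with $b_\alpha$: the gap condition only constrains the inner measure of $b_\alpha$ and of $b_\alpha^c$ separately, so controlling $\mu_{\alpha+1}(c \triangle b_\alpha)$ for such $c$ demands either a careful refinement of the witness $b_\alpha$ (to sit inside the ``unapproximable part'' of $\algb$ relative to $\algb_\alpha$) or a global midpoint choice of the extension simultaneously for every $c \in \algb_\alpha$, exploiting the super--additive structure across the partition $\{b_\alpha, b_\alpha^c\}$.
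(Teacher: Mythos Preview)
The paper does not prove this statement; it is quoted from \cite[Theorem~4.6]{Pbn} without argument, so there is no proof here to compare your attempt against.

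On the merits of your outline: the strategy is natural, but the gap you yourself flag in the last paragraph is genuine and fatal as the proposal stands. Fixing $\mu_{\alpha+1}(b_\alpha)$ at the midpoint of the interval $[(\mu_\alpha)_*(b_\alpha),\,1-(\mu_\alpha)_*(b_\alpha^c)]$ does \emph{not} determine the extension to $\algb_\alpha(b_\alpha)$: you must also assign $\mu_{\alpha+1}(c\wedge b_\alpha)$ for every $c\in\algb_\alpha$, and nothing you wrote constrains those values. For a concrete failure, let $\algb_\alpha=\{\mathbf 0,c,c^c,\mathbf 1\}$ with $\mu_\alpha(c)=1/2$, and let $b_\alpha$ be incomparable with both $c$ and $c^c$. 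Then $(\mu_\alpha)_*(b_\alpha)=(\mu_\alpha)_*(b_\alpha^c)=0$, the gap is the full unit, and your midpoint rule gives $\mu_{\alpha+1}(b_\alpha)=1/2$; but the Horn--Tarski extension with $\mu_{\alpha+1}(c\wedge b_\alpha)=1/2$ (which is perfectly admissible, since $(\mu_\alpha)_*(c\wedge b_\alpha)=0$ and $(\mu_\alpha)^*(c\wedge b_\alpha)=1/2$) yields $\mu_{\alpha+1}(c\bigtriangleup b_\alpha)=0$. So the midpoint placement of $\mu_{\alpha+1}(b_\alpha)$ alone cannot force $b_\alpha$ away from $\algb_\alpha$.

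The two repairs you sketch point in the right direction but each requires real additional work that is not supplied: either you must strengthen the choice of $b_\alpha$ so that the gap inequality holds \emph{relativised to every} $c\in\algb_\alpha$ (something like $(\mu_\alpha)_*(c\wedge b_\alpha)+(\mu_\alpha)_*(c\wedge b_\alpha^c)+(\mu_\alpha)_*(c^c\wedge b_\alpha)+(\mu_\alpha)_*(c^c\wedge b_\alpha^c)\le 1-\eps$), which does not follow from the single global gap, or you must specify the entire extension functional and then prove the distance bound against all of $\algb_\alpha$ simultaneously. As written, the proposal is a plausible plan rather than a proof. (A minor side remark: the appeal to Fodor is unnecessary---a simple pigeonhole on the values $\eps_\alpha$ already gives an uncountable, hence cofinal, set with a uniform lower bound.)
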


Recently, Miko\l aj Krupski proved the above
theorem in a more general setting, see
\cite{Krupski}.

One should point out here that (consistently) there are
small Boolean algebras without uniformly regular
measures. Indeed, Talagrand (\cite{Talagrand})
used $\mathsf{CH}$ to construct a small Gronthendieck space $K$, i.e.
a space such that there are no non--trivial
(i.e. not weakly convergent) weakly$^*$ convergent
sequences of measures on $K$. Grothendieck property 
implies that $P(K)$ does not have
$G_\delta$ points, and thus (by \cite[Proposition 2]{Pol}) $K$
does not carry a uniformly regular measure.   
Talagrand's example is zero--dimensional
and cannot be continuously mapped onto
$[0,1]^{\om_1}$. Thus, it is the Stone space of a
small Boolean algebra without a uniformly regular
measure. Such an example cannot be, however,
constructed without additional axioms, because of 
Theorem \ref{Fremlin-char} and Theorem \ref{alternative}.

Notice also, that the alternative in Theorem
\ref{alternative} is, by no means, exclusive.
There are many Boolean algebras with both
non--separable and uniformly regular measures
(the Jordan algebra can serve as an example here).

For our purposes a \emph{strictly positive}
version of Theorem \ref{alternative} would be most
desirable. However, it turned out that we cannot
hope for that:

\begin{thm} \label{bellike}
There is a Kelley algebra supporting only
separable measures but no uniformly regular one.
\end{thm}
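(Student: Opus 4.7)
The plan is to construct $\alg$ explicitly as a small subalgebra of the Cohen algebra $\algc$ (equipped with the Lebesgue measure $\lambda$) by a transfinite recursion of length $\omega_1$. I would start from the Cantor subalgebra $\alg_c \subseteq \algc$ and build a continuous increasing chain of countable subalgebras $(\alg_\alpha)_{\alpha<\omega_1}$ with $\alg_0=\alg_c$, setting $\alg=\bigcup_{\alpha<\omega_1}\alg_\alpha$. Since $\alg_c\subseteq\alg\subseteq\algc$, the restriction $\lambda|_\alg$ is a strictly positive measure on $\alg$, so $\alg$ is automatically Kelley; the work is therefore in arranging the recursion so as to block uniform regularity while preserving separability of every supported measure.

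The heart of the construction is the choice of the new element $a_\alpha$ at successor stages. Using a standard $\omega_1$-bookkeeping, enumerate triples $(\gamma,\mathcal D,\varepsilon)$ with $\gamma<\omega_1$, $\mathcal D\subseteq\alg_\gamma$ countable, and $\varepsilon>0$ rational. At stage $\alpha$ indexing such a triple with $\gamma<\alpha$, I choose $a_\alpha\in\algc$ of positive $\lambda$-measure in such a way that for every $d\in\mathcal D$ with $d\le a_\alpha$, one has $\lambda(a_\alpha\setminus d)\ge\varepsilon$. Such $a_\alpha$ exist in abundance by Proposition \ref{good partitions}: since $\alg_\gamma$ is countable, $\JJ_{\lambda|_{\alg_\gamma}}$ does not exhaust $\algc$, and we can pick $a_\alpha$ with a preassigned $\lambda$-Jordan gap relative to $\alg_\gamma$. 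Setting $\alg_{\alpha+1}$ to be the algebra generated by $\alg_\alpha\cup\{a_\alpha\}$ gives a countable atomless extension, and at limits one takes unions. From this setup it is immediate that no countable $\mathcal D\subseteq\alg$ is uniformly $\lambda$-dense: $\mathcal D$ lies in some $\alg_\gamma$ and the index $\alpha$ corresponding to $(\gamma,\mathcal D,\varepsilon)$ supplies a witnessing $a_\alpha$.

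The main obstacle is to upgrade this diagonalization from $\lambda$ to an arbitrary strictly positive measure $\mu$ on $\alg$, and simultaneously to prove that every such $\mu$ is separable. I would attack both together by arranging the recursion to make $\alg$ ``measure-rigid'' over $\lambda$: include at cofinal stages elements which force any strictly positive $\mu$ on $\alg$ to be mutually absolutely continuous with $\lambda|_\alg$ on the countable chain (so that the pointwise $\lambda$-gap $\varepsilon$ at $a_\alpha$ translates to a uniform positive $\mu$-gap, ruling out uniform regularity of $\mu$), and to make the chain $(\alg_\alpha)$ itself $\mu$-dense cofinally (which, via a ccc-pigeonhole argument exploiting smallness of $\alg$, gives separability of every supported $\mu$). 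Concretely, for each bookkeeping triple I would also add auxiliary elements guaranteeing that a $\mu$-null element of $\alg$ has a countable $\alg_\gamma$-approximation of arbitrarily small $\lambda$-measure; this converts the rigidity question into a Kelley-style intersection condition that can be maintained throughout the recursion. The whole construction must be checked to preserve smallness of $\alg$ (no copy of ${\rm Free}(\omega_1)$ appears), which is the technical price for ccc arguments giving separability without invoking $\mathsf{MA}(\omega_1)$; I expect this to require a density analysis of $a_\alpha$ relative to independent sequences already present in $\alg_\gamma$, which is the most delicate bookkeeping step.
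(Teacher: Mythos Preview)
Your proposal has two genuine gaps, and they are exactly the two places where you defer to vague mechanisms.

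First, your diagonalization kills uniform $\lambda$-density only. To conclude that \emph{no} strictly positive $\mu$ on $\alg$ is uniformly regular you invoke a ``measure-rigidity'' step forcing every such $\mu$ to be mutually absolutely continuous with $\lambda$, so that a fixed $\lambda$-gap at $a_\alpha$ becomes a $\mu$-gap. But mutual absolute continuity does not give a uniform lower bound: even if $\mu\ll\lambda\ll\mu$ on $\alg$, the $\mu$-value of $a_\alpha\setminus d$ can be arbitrarily small while its $\lambda$-value stays above $\varepsilon$. You would need something like a two-sided bounded Radon--Nikodym comparison, and there is no mechanism in a transfinite one-element-at-a-time extension that forces this for \emph{all} future strictly positive measures. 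The paper sidesteps this entirely: it arranges that the Stone space has no countable $\pi$-base (Proposition~\ref{pi-base}), a purely order-theoretic property that immediately rules out uniform regularity for \emph{every} measure, with no absolute-continuity argument needed.

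Second, your plan for ``every supported measure is separable'' appeals to smallness plus a ccc pigeonhole. But as the paper notes right after Theorem~\ref{Fremlin-char}, there are consistently small Boolean algebras carrying non-separable measures, so smallness alone cannot yield this in $\mathsf{ZFC}$; your argument would at best give the theorem under $\mathsf{MA}(\omega_1)$. The paper instead builds $\alg(\TTT)$ from a $\subseteq^*$-increasing tower $(T_\alpha)_{\alpha<\omega_1}$ in $P(\omega)$ and proves separability of every measure by a direct combinatorial computation (Theorem~\ref{bell}) that exploits the tower structure: one shows that an $\varepsilon$-separated family among the generators $T_\alpha^1$ would force the inner approximations $\rho(T_\alpha)$ to increase by a fixed amount uncountably often, which is impossible. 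Nothing in your recursion provides an analogue of this structure, and the ``make $(\alg_\alpha)$ $\mu$-dense cofinally'' idea is circular, since $\mu$ is only given after $\alg$ is built. In short, the paper's route---a concrete tower-generated subalgebra of $P(2^\omega)$, with separability of the space giving Kelley-ness, absence of a countable $\pi$-base giving no uniform regularity, and the tower combinatorics giving separability of all measures---is essentially different from your diagonalization, and supplies precisely the ingredients your sketch is missing.
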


We will prove this theorem building on ideas contained
in \cite{Bell}. Bell constructed in this paper 
a Boolean space which is separable, which does not 
have a countable $\pi$--base and whose algebra of
clopen subsets is small. 

The space presented below is
similar to the space constructed by Bell. 
However, our approach is different and, at least for our purposes,
simpler than that of Bell. In particular,
it allows us to prove that each measure supported by this
space is separable. 

First, we introduce some
notation. For $A \sub \om$ let $A^0 \sub 2^\om$ be
the set of the form
\[ A^0 = \{x\in 2^\om\colon \forall n\in A \ \
x(n) = 0\}. \]
For a family $\AAA \sub P(\om)$ let 
\[ \AAA^0 = \{A^0\colon A\in \AAA\}. \]
If $\AAA \sub P(\om)$, then let
$\alg(\AAA)\sub P(2^\om)$ be the Boolean algebra generated by
$\AAA^0$ and let $K(\AAA)$ be the Stone space of
this algebra. For $A \sub \om$ let 
\[ A^1 = \{x\in 2^\om\colon \exists n\in A \ \
x(n) = 1\}. \]
Of course $A^1 = (A^0)^c$ for every $A\sub \om$.

Let us collect some immediate observations:
\begin{prop} \label{basic facts}
\begin{itemize}
	\item[1)] If $\mathrm{Fin}$ is the set of finite subsets of
		$\om$, then $\mathrm{Fin}^0$ generates $\mathrm{Clop}(2^\om)$;
	\item[2)] If $\mathrm{Fin} \sub \AAA$, then $\alg(\AAA)$ is
		an extension of the Cantor algebra (and there is a continuous function
		from $K(\AAA)$ onto $2^\om$). Every $x\in
		2^\om$ can be interpreted as a closed subset of
		$K(\AAA)$. Namely, for $x\in 2^\om$ let $F_x$ be the set of all
		ultrafilters on $\alg(\AAA)$ extending the filter
		generated by 
		\[ \{\{n\}^0\colon x(n)=0 \} \cup \{\{n\}^1
		\colon x(n)=1\}; \]
\end{itemize}
\end{prop}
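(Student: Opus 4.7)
The plan is to reduce everything to standard Stone duality, observing that $\{n\}^0$ and $\{n\}^1 = (\{n\}^0)^c$ are precisely the usual subbasic clopens of $2^\om$. All three assertions then follow by unpacking definitions.

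For (1), I would first note that $\{n\}^0 = \{x\in 2^\om : x(n)=0\}$ lies in $\mathrm{Fin}^0$, and its complement $\{n\}^1$ is in the Boolean algebra generated by $\mathrm{Fin}^0$. Since the family $\{\{n\}^0, \{n\}^1 : n\in \om\}$ is a subbase for the product topology on $2^\om$, the algebra it generates equals $\mathrm{Clop}(2^\om)$. Conversely, for finite $A\subseteq \om$, $A^0 = \bigcap_{n\in A}\{n\}^0$ is already in the algebra generated by $\{\{n\}^0 : n\in \om\}\subseteq \mathrm{Fin}^0$. Hence $\alg(\mathrm{Fin}) = \mathrm{Clop}(2^\om)$.

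For (2), from $\mathrm{Fin}\subseteq \AAA$ we get $\alg(\AAA)\supseteq \alg(\mathrm{Fin}) = \mathrm{Clop}(2^\om)$ by (1), so $\alg(\AAA)$ is a Boolean-algebra extension of the Cantor algebra. The continuous surjection $\pi \colon K(\AAA) \to 2^\om$ is then the contravariant Stone dual of this inclusion: send $p\in K(\AAA)$ to its trace $p\cap \mathrm{Clop}(2^\om)$, viewed as a point of $K(\mathrm{Clop}(2^\om))\cong 2^\om$. Continuity is immediate because preimages of subbasic clopens are clopen in $K(\AAA)$, and surjectivity is the standard fact that every ultrafilter on a subalgebra extends to one on the full algebra.

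For the $F_x$ assertion, I would identify the filter in question with the trace on $\alg(\AAA)$ of the clopen neighbourhood filter of $x$ in $2^\om$; the family of generators has the finite intersection property because $x$ itself belongs to each generator, so the generated filter is proper. An ultrafilter $p$ on $\alg(\AAA)$ then extends this filter exactly when $\pi(p) = x$, so $F_x = \pi^{-1}(\{x\})$. This is closed in $K(\AAA)$ as the preimage of a point under the continuous map $\pi$, and non-empty by surjectivity of $\pi$, which justifies viewing $x$ as a closed subset of $K(\AAA)$. The whole proposition is routine; the only minor care needed is to read the filter as living inside $\alg(\AAA)$ (not just inside $\mathrm{Clop}(2^\om)$), which is already how the statement is phrased, and to verify that generator-families index by $x$ have the finite intersection property, which is trivial since $x$ is in all of them.
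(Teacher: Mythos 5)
Your argument is correct. The paper itself offers no proof of this proposition --- it is stated as a list of ``immediate observations'' --- and your Stone-duality unpacking (identifying $\{n\}^0$ and $\{n\}^1$ with the standard subbasic clopens, obtaining the surjection $\pi\colon K(\AAA)\to 2^\om$ as the dual of the inclusion $\mathrm{Clop}(2^\om)\subseteq\alg(\AAA)$, and recognising $F_x=\pi^{-1}(\{x\})$) is exactly the routine verification the authors intend the reader to supply.
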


For $\AAA\sub P(\om)$ the family of elements of
the form
\[ A^0_0\cap A^0_1 \cap \dots \cap A^0_k \cap
A_{k+1}^1 \cap \dots \cap A_n^1 \]
is a base of $K(\AAA)$. Every element of
$\alg(\AAA)$ is a finite union of sets of this
form. Since $A^0\cap B^0 = (A\cup B)^0$, if $\AAA$
is closed under taking finite unions, then elements
of the above base can be written in a simpler
form:
\[ A^0 \cap A^1_0 \cap \ldots \cap A^1_n \]
for $A, A_0, \ldots, A_n \in \AAA$.

Before pointing out which particular family $\AAA$
we will consider, we prove two general theorems
concerning spaces $K(\AAA)$.

\begin{prop} \label{separability}
	Let $\mathrm{Fin} \sub \AAA\sub P(\om)$. Then there is a
countable family of closed subsets $\FF$ of $K(\AAA)$ 
such that for every nonempty open set $U$ in
$K(\AAA)$ there is $F\in \FF$ such that $F\sub U$.
Consequently, $K(\AAA)$ is separable.
\end{prop}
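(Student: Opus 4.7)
The plan is to exhibit, for each finite $S \sub \om$, a canonical point $\mathcal{U}_S \in K(\AAA)$ and show that the countable set $\{\mathcal{U}_S : S \in \mathrm{Fin}\}$ is dense in $K(\AAA)$. Then $\FF = \{\{\mathcal{U}_S\} : S \in \mathrm{Fin}\}$ is a countable family of closed subsets (singletons are closed in the Hausdorff space $K(\AAA)$), and every nonempty open $U$ contains some $\{\mathcal{U}_S\} \in \FF$. The separability conclusion is immediate: choosing one point from each $F \in \FF$ --- the $\mathcal{U}_S$'s themselves --- gives a countable dense subset of $K(\AAA)$.

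The definition of $\mathcal{U}_S$ is simply the restriction of ``evaluation at $\chi_S$'' to $\alg(\AAA)$, namely
\[ \mathcal{U}_S = \{X \in \alg(\AAA) : \chi_S \in X\}, \]
where $\chi_S \in 2^\om$ is the characteristic function of $S$. Since $\alg(\AAA) \sub \PP(2^\om)$ (the generators $A^0$ being genuine subsets of $2^\om$) and since evaluation at any point of $2^\om$ is a Boolean homomorphism into $\{0,1\}$, $\mathcal{U}_S$ is a bona fide ultrafilter on $\alg(\AAA)$, hence a point of $K(\AAA)$.

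To prove density, fix a nonempty basic open
\[ U = A_0^0 \cap \cdots \cap A_m^0 \cap B_0^1 \cap \cdots \cap B_n^1, \]
put $A = A_0 \cup \cdots \cup A_m$, and note that $U$ being nonzero in $\alg(\AAA)$ is equivalent to the corresponding subset of $2^\om$ being nonempty, which forces $B_j \setminus A \ne \emptyset$ for every $j$. Choose $n_j \in B_j \setminus A$ and set $S = \{n_0, \dots, n_n\}$. Then $S \cap A_i = \emptyset$ so $\chi_S \in A_i^0$ for each $i$, and $n_j \in S \cap B_j$ so $\chi_S \in B_j^1$ for each $j$; thus $\chi_S \in U$ as a subset of $2^\om$, which is the same as $\mathcal{U}_S \in U$ as a point of $K(\AAA)$.

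The only subtlety worth flagging is that the naive candidate $\FF = \{F_{\chi_S} : S \in \mathrm{Fin}\}$ suggested by Proposition \ref{basic facts} does \emph{not} work: when $A \in \AAA$ is infinite and disjoint from $S$, the finite filter base $\{A^1\} \cup \{\{n\}^0 : n \in A\}$ still has the finite intersection property in $\alg(\AAA)$, so the fibre $F_{\chi_S}$ contains ultrafilters extending to $A^1$ rather than to $A^0$, whence $F_{\chi_S} \not\sub A^0$. Selecting the single ``principal-like'' ultrafilter $\mathcal{U}_S \in F_{\chi_S}$ --- which by definition makes the right choice on every generator, including infinite ones --- bypasses this obstruction.
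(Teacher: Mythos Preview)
Your proof is correct and follows the same witness-finding strategy as the paper: given a nonempty basic set $A_0^0\cap\dots\cap A_m^0\cap B_0^1\cap\dots\cap B_n^1$, both you and the paper pick witnesses $n_j\in B_j\setminus(A_0\cup\dots\cup A_m)$ and pass to the finitely supported point $x=\chi_S$ with $S=\{n_0,\dots,n_n\}$. The difference is precisely the one you highlight in your final paragraph: the paper takes the whole fibre $F_x$ as the closed set, while you take the single principal-type ultrafilter $\mathcal{U}_S\in F_x$ determined by evaluation at $\chi_S$.

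Your final observation is not a mere aside; it points to a genuine gap in the paper's argument. If some $A_j$ with $j\le m$ is infinite (which is certainly allowed, since $\AAA$ may contain infinite sets), then the filter base $\{\{n\}^0:n\notin S\}\cup\{\{n\}^1:n\in S\}\cup\{A_j^1\}$ does have the finite intersection property in $\alg(\AAA)$: any finitely many constraints $\{n\}^0$ leave infinitely many coordinates of $A_j$ free to be set to~$1$. Hence $F_x$ contains ultrafilters through $A_j^1$, and the paper's claim $F_x\subseteq U$ fails. Your remedy of passing to the single point $\mathcal{U}_S$ is exactly right, since evaluation at $\chi_S$ decides $A_j^0$ correctly for every $A_j\in\AAA$, finite or not. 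So your argument is not merely an alternative but a repair of the published proof, carried out while preserving its essential idea.
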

\begin{proof}
Let
\[ \FF = \{F_x\colon x\in 2^\om, \ x(n)=1\mbox{ for
finitely many }n\mbox{'s}\}, \] 
where $F_x$ is as in Proposition \ref{basic facts} (2).
Let $U \sub K(\AAA)$ be an open subset. 
Without loss of generality we can assume that it
is of the form 
\[ U  =  A^0_0\cap A^0_1 \cap \ldots \cap A^0_k \cap
A_{k+1}^1 \cap \ldots \cap A_n^1, \]
for $A_1, \dots, A_n \in \AAA$. If $U$ is nonempty, then for
every $i>k$ we have 
\[ B_i = A_i \setminus (A_0\cup \dots \cup A_k) \ne
\emptyset. \]
Pick $n_i \in B_i$ for every $k<i\leq n$. Let $x\in 2^\om$
be such that $x(n_i)=1$ for every $i$ and $x(n)=0$
if there is no $i$ such that $n=n_i$. Then $F_x\in
\FF$ and $F_x\sub U$. 
\end{proof}

\begin{prop} \label{pi-base} 
	Let $\mathrm{Fin} \sub \AAA \sub P(\om)$ and assume that
$\AAA$ is closed under finite unions.
Suppose that $\AAA$ does not have a cofinite family of
cardinality $\lambda$, i.e. for every $\AAA_0\sub
\AAA$ of size $\lambda$ there is $B\in
\AAA$ such that $B\setminus A \ne \emptyset$
for every $A\in \AAA_0$. Then $K(\AAA)$ does not have
a $\pi$--base of size $\lambda$.
\end{prop}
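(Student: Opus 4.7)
The plan is to argue by contradiction: starting from a supposed small $\pi$-base, I will exhibit a single nonempty open set that contains no member of the base. Suppose $\mathcal{B}$ is a $\pi$-base of $K(\AAA)$ of cardinality at most $\lambda$. First I normalize: each $B\in\mathcal{B}$ may be replaced by a nonempty basic open subset, and since $\AAA$ is closed under finite unions I can collapse any conjunction of zero-parts via $A^0 \cap A'^{0}=(A\cup A')^0$. After this normalization, each
\[
B = A_B^0 \cap A_{B,1}^1 \cap \ldots \cap A_{B,n_B}^1,
\]
with $A_B,A_{B,1},\ldots,A_{B,n_B}\in\AAA$, and the resulting family is still a $\pi$-base.

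Next I apply the non-cofinality hypothesis to the ``zero-parts'' $\{A_B : B\in\mathcal{B}\}\sub\AAA$, padding with arbitrary elements of $\AAA$ to reach cardinality $\lambda$ if necessary. This yields a single $C\in\AAA$ with $C\setminus A_B\neq\emptyset$ for every $B\in\mathcal{B}$. The candidate witness is $U:=C^0$, which is always a nonempty clopen set in $K(\AAA)$ (the constantly-zero sequence lies in $C^0\sub 2^\om$, so $C^0\neq\mathbf{0}$ in $\alg(\AAA)$).

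To finish I show no $B\in\mathcal{B}$ is contained in $U$. Since $\alg(\AAA)$ is realized as a subalgebra of $P(2^\om)$, the inclusion $B\sub C^0$ in $K(\AAA)$ is equivalent to $B\cap C^1=\emptyset$ as subsets of $2^\om$, which I refute by explicitly producing $x\in B\cap C^1$. Pick $m\in C\setminus A_B$ (available by the choice of $C$) and, for each $i\le n_B$, pick $m_i\in A_{B,i}\setminus A_B$ (available because $B\neq\emptyset$ forces $A_{B,i}\not\sub A_B$). Define $x\in 2^\om$ by $x^{-1}(1)=\{m,m_1,\ldots,m_{n_B}\}$. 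Then $A_B\cap x^{-1}(1)=\emptyset$ gives $x\in A_B^0$, each $m_i$ witnesses $x\in A_{B,i}^1$, and $m$ witnesses $x\in C^1$. So $x\in B\setminus C^0$, contradicting $B\sub U$ and hence the assumption that $\mathcal{B}$ is a $\pi$-base.

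The main obstacle here is psychological rather than technical: picking the right candidate for $U$. The naive guess $U=C^1$ fails because $B\sub C^1$ could already hold whenever some $A_{B,i}\sub A_B\cup C$, which is not something the hypothesis controls. The choice $U=C^0$ succeeds for the complementary reason: the non-emptiness of each $B$ frees up coordinates inside $\om\setminus A_B$ in which to place the $m_i$'s, while the existence of $m\in C\setminus A_B$ independently supplies a coordinate of $x^{-1}(1)$ inside $C$, so both the ``$x\in B$'' and the ``$x\notin C^0$'' requirements can be met simultaneously.
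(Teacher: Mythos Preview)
Your proof is correct and follows essentially the same route as the paper's: normalize each member of the $\pi$-base to the form $A_V^0\cap A_1^1\cap\cdots\cap A_n^1$, collect the zero-parts $\{A_V\}$, and use the non-cofinality hypothesis to find $C\in\AAA$ with $C\not\subseteq A_V$ for all $V$, so that $C^0$ witnesses failure of the $\pi$-base. The only difference is expository: the paper states the key implication ``$V\subseteq C^0 \Rightarrow C\subseteq A_V$'' as ``clearly'', whereas you spell out the construction of the point $x$ with $x^{-1}(1)=\{m,m_1,\ldots,m_{n_B}\}$ that refutes $V\subseteq C^0$ when $C\not\subseteq A_V$.
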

\begin{proof}
Suppose $\VV$ is a $\pi$--base of $K(\AAA)$. 
We can assume that it consists of sets of the form 
\[ V = A_V^0 \cap A^1_0 \cap \ldots
\cap A^1_n \]
for $A_V, A_0, \dots, A_n \in \AAA$. 
Assume that $B\in \AAA$ is infinite and $V\in \VV$ is
such that $V \sub B^0$. Clearly, $B\sub A_V$.
So, if we let $\AAA_0 = \{A_V\colon V\in \VV\}$,
then
\begin{itemize}
	\item $|\AAA_0| \leq |\VV|$,
	\item for every $B\in \AAA$ there is
		$A\in \AAA_0$ such that $B \sub A$, so $\lambda < |\AAA_0|$.
\end{itemize}
Therefore, $\lambda<|\VV|$.
\end{proof}
\bigskip

Notice that if a family $\AAA$ contains an
uncountable pairwise almost disjoint family
$(A_\alpha)_{\alpha<\om_1}$, then
$(A^0_\alpha)_{\alpha<\om_1}$ forms an uncountable
independent sequence in $\alg(\AAA)$ and
consequently $\alg(\AAA)$ is big. So, if we want to construct a
Kelley algebra which is small (to omit supporting a non--separable measure, cf. Theorem \ref{Fremlin-char}), 
we have to use a family
which does not contain many pairwise almost disjoint
sets. 

The natural example of such a family satisfying
also the conditions of Theorem \ref{pi-base} is
the following.
Let $\{T_\alpha\colon \alpha<\om_1\} \sub P(\om)$
be such that $T_0 =
\emptyset$ and for every $\alpha < \beta <\om_1$
the set $T_\alpha \setminus T_\beta$ is
finite and $T_\beta \setminus T_\alpha$ is
infinite. Shortly speaking, $(T_\alpha)_\alpha$ is a
strictly $\sub^*$--increasing tower. Let $\TTT$ consists of those sets $T\sub \om$ such
that $T= T_\alpha \cup F$ for some $\alpha<\om_1$
and some finite $F\sub \om$. 

Notice that $\TTT$ satisfies the assumptions of
Proposition \ref{pi-base}, so $K(\TTT)$ is a
separable space without a countable $\pi$--base.
Now we will prove that all measures on $K(\TTT)$
are separable.

Recall that if $\mu$ is a non--separable measure
on a Boolean algebra $\alg$, then we can find an
uncountable family $\AAA$ of generators of $\alg$
and $\varepsilon > 0$ such that 
\[ \mu(A\bigtriangleup B) > \varepsilon \]
for every distinct $A$, $B\in \AAA$. Otherwise, we
could find a countable family $\BB$ which is
$\mu$--dense in the set of generators of $\alg$. 
But then the (countable) Boolean algebra generated by
$\BB$ would be $\mu$--dense in  $\alg$, 
and so $\mu$ would be separable. 

\begin{thm} \label{bell}
Every measure on $\alg(\TTT)$ is separable.
\end{thm}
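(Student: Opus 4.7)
The plan is to argue by contradiction. Suppose $\mu$ is non-separable; by the criterion recalled immediately before the theorem there exist $\eps>0$ and an uncountable family of generators of $\alg(\TTT)$ pairwise $d_\mu$-separated by at least $\eps$. Since each generator has the form $(T_\alpha\cup F)^0 = T_\alpha^0\cap F^0$ and only countably many finite $F$'s appear, pigeonhole reduces to a fixed finite $F$. The identity
\[
((T_\alpha\cup F)^0)\bigtriangleup ((T_\beta\cup F)^0) = (T_\alpha^0\bigtriangleup T_\beta^0)\cap F^0 \sub T_\alpha^0\bigtriangleup T_\beta^0
\]
then propagates the $\eps$-separation to the corresponding $T_\alpha^0$'s, so I may assume an uncountable $I\sub\om_1$ with $\{T_\alpha^0:\alpha\in I\}$ pairwise $\eps$-separated in $d_\mu$.

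Next I exploit the tower. For $\alpha<\beta$ in $I$, $F_{\alpha\beta}:=T_\alpha\setminus T_\beta$ is finite and $T_\alpha^0\cap T_\beta^0 = T_\beta^0\cap F_{\alpha\beta}^0$; writing $r_\gamma=\mu(T_\gamma^0)$, a direct computation gives
\[
\mu(T_\alpha^0\bigtriangleup T_\beta^0) = r_\alpha - r_\beta + 2\mu(T_\beta^0\cap F_{\alpha\beta}^1).
\]
Thinning $I$ so that all $r_\alpha$'s lie within $\eps/10$ of a fixed $r$ yields $\mu(T_\beta^0\cap F_{\alpha\beta}^1)>\eps/3$ for all $\alpha<\beta$ in the thinned family. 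The endgame is then to use the tower-imposed approximate nestedness $F_{\alpha_1\beta}\sub F_{\alpha_2\beta}\cup (T_{\alpha_1}\setminus T_{\alpha_2})$ of the finite sets $F_{\alpha\beta}$, together with a combination of $\Delta$-system and Fodor-type arguments, to extract a single finite $F^*\sub\om$ and a single $\alpha_0<\om_1$ such that $T_{\alpha_0}\setminus T_\beta=F^*$ for an uncountable stationary set $I^{**}$ of $\beta$'s; for such $\beta$'s the sets $T_\beta$ share the common base $T^*:=T_{\alpha_0}\setminus F^*$, all miss the fixed finite $F^*$, and satisfy $\mu(T_\beta^0\cap(F^*)^1)>\eps/3$.

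The main obstacle, which I expect to be the most delicate part, is twofold. First, extracting the uniform $F^*$ is subtle because pigeonhole on the countably many values of $F_{\alpha\beta}$ does not a priori give an infinite constant subsequence (a function from an infinite set to a countable set need not have an infinite fibre); this step must use the approximate $\sub$-monotonicity of the $F_{\alpha\beta}$'s in $\alpha$ combined with a $\Delta$-system reduction on the pairs. Second, converting the resulting structural configuration into an actual contradiction most naturally proceeds by an iterative reduction: having extracted a common base $T^*$, one writes $T_\beta = T^*\cup\tilde T_\beta$ with $\tilde T_\beta\sub\om\setminus(T^*\cup F^*)$, observes that $(\tilde T_\beta)_{\beta\in I^{**}}$ is a new $\sub^*$-increasing $\om_1$-tower on a strictly smaller subset of $\om$, and argues that the $\eps$-separation (possibly with a diminished constant) transfers to the new generators $\tilde T_\beta^0\cap(F^*)^1$. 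Iterating this reduction strictly enlarges the common base at each stage; after at most countably many steps this must exhaust the tower, forcing a contradiction with the finite $\mu$-mass of the ambient cylinder $(F^*)^1$. An alternative tactic that may prove cleaner is to analyse the pushforward $\pi_*\mu$ on $2^\om$ (which is automatically separable) and show that the ``extra mass'' $\mu(\pi^{-1}(T_\alpha^0_{2^\om})\setminus T_\alpha^0)$ can only be large for countably many $\alpha$.
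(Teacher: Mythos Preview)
Your setup through the inequality $\mu(T_\beta^0\cap F_{\alpha\beta}^1)>\eps/3$ (for all $\alpha<\beta$ in the thinned index set, with $F_{\alpha\beta}=T_\alpha\setminus T_\beta$) is correct and matches the paper exactly; this is the content of the paper's Claims~2--3. From here on, however, the endgame you sketch does not close.

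First, your ``first obstacle'' is not an obstacle at all: once $\alpha_0$ is fixed, the map $\beta\mapsto T_{\alpha_0}\setminus T_\beta$ sends an uncountable set into the countable set of finite subsets of $T_{\alpha_0}$, so pigeonhole immediately gives an uncountable fibre $F^*$. No $\Delta$-system or Fodor argument is needed.

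The real gap is the iteration. After one round you have disjoint finite sets $F_0^*\subseteq T_{\alpha_0}$ and $F_1^*\subseteq T_{\alpha_1}$ and, for each $\beta$ in the surviving index set, both $\mu(T_\beta^0\cap (F_0^*)^1)>\eps/3$ and $\mu(T_\beta^0\cap (F_1^*)^1)>\eps/3$. But the sets $T_\beta^0\cap (F_i^*)^1$ are \emph{not} disjoint for different $i$ (disjointness of $F_0^*,F_1^*$ does not make $(F_0^*)^1,(F_1^*)^1$ disjoint), so you cannot sum these masses inside any fixed set of finite measure. Your sentence ``after at most countably many steps this must exhaust the tower, forcing a contradiction with the finite $\mu$-mass'' therefore has no content: nothing is being exhausted and no finite mass is being overrun. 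The alternative pushforward idea is likewise only a heuristic; the set $\pi^{-1}(T_\alpha^0)\setminus T_\alpha^0$ is not clopen in $K(\TTT)$, and controlling its $\mu$-mass is essentially the original problem restated.

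The missing idea is a bounded real-valued invariant. Define $\rho(T_\alpha)=\sup\{\mu(G^1):G\in\mathrm{Fin},\ G\subseteq T_\alpha\}$, thin once more so that every $\rho(T_\alpha)$ lies within $\eps/6$ of $\sup_\gamma\rho(T_\gamma)$, and arrange (by discarding countably many indices) that every finite $G$ contained in some $T_\alpha$ is contained in uncountably many $T_\beta$. Your inequality yields, for every finite $G\subseteq T_\beta$, that $\mu((F_{\alpha\beta}\cup G)^1)>\mu(G^1)+\eps/3$. Now take $G\subseteq T_0$ with $\mu(G^1)>\rho(T_0)-\eps/6$, choose $\beta$ with $G\subseteq T_\beta$, and set $F=T_0\setminus T_\beta$: then $F\cup G$ is a finite subset of $T_0$ with $\mu((F\cup G)^1)>\rho(T_0)+\eps/6>\sup_\gamma\rho(T_\gamma)$, contradicting the definition of $\rho(T_0)$. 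One step, no iteration.
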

\begin{proof}
Suppose toward a contradiction that there is a
non--separable measure $\mu$ on $\alg(\TTT)$.
Then using the above remark assume that
\[ \mu(T_\alpha^1 \bigtriangleup T_\beta^1) >
\varepsilon \]
for every $\alpha < \beta <\om_1$. We can do it since
$\{T^1\colon T\in \TTT\}$ generates $\alg(\TTT)$
and we can consider a subalgebra of $\alg(\TTT)$
if necessary.

For $\alpha<\om_1$ denote \[ \rho(T_\alpha) =
\sup\{\mu(F^1)\colon
F\in \mathrm{Fin}, F\sub T_\alpha\}.\] Notice that
$\rho(T_\alpha) = \nu_*(T^1_\alpha)$, where $\nu =
\mu|\alg(\mathrm{Fin}^1)$, and that $\rho(T_\alpha)\leq
\mu(T^1_\alpha)$ for every $\alpha$.

Considering a sub--tower of
$(T_\alpha)_{\alpha<\om_1}$ of height $\om_1$ 
if necessary, we can assume that $|\mu(T_\alpha^1) -
\mu(T_\beta^1)| < \varepsilon/3$ and $\rho(T_\alpha) >
\sup_\alpha \rho(T_\alpha) - \varepsilon/6$
for every $\alpha$, $\beta<\om_1$.
\medskip

CLAIM 1. We can assume that the tower
$(T_\alpha)_{\alpha<\om_1}$ has the following
property (*): for every finite $F\sub \om$ either
there is no $\alpha<\om_1$ such that $F\sub
T_\alpha$ or $F\sub T_\alpha$ for uncountably many
$\alpha$'s. 
\medskip

Enumerate $\{F_n\colon n\in \om\}$ the set of
those finite sets which are included in $T_\alpha$
for at most countably many $\alpha$'s. Let
$\alpha_n = \sup\{\alpha\colon F_n\sub T_\alpha\}$
and let $\gamma = \sup_n \alpha_n$. It is clear
that $(T_\alpha)_{\alpha>\gamma}$ is a tower of
height $\om_1$ with the property (*). So we can
assume without loss of generality that $\gamma=0$
and $(T_\alpha)_{\alpha<\om_1}$ has property (*).
\medskip

CLAIM 2. For every $\alpha<\beta$ we have
\[ \mu( (T_\alpha \cup T_\beta)^1 )> \mu(T^1_\beta)+
\varepsilon/3. \]
Indeed, 
\[ \varepsilon < \mu(T_\alpha^0 \bigtriangleup T_\beta^0) =
\mu(T_\alpha^0)+\mu(T_\beta^0) -
2\mu( (T_\alpha\cup T_\beta)^0 ).  \]
Hence
\[ 2\mu( (T_\alpha \cup T_\beta)^0 ) <
\mu(T_\alpha^0) + \mu(T_\beta^0) -\varepsilon \leq
2\mu(T_\beta^0) + \varepsilon/3 - \varepsilon \]
and
\[ \mu( (T_\alpha \cup T_\beta)^0 ) <
\mu(T_\beta^0) - \varepsilon/3. \]
Since $(T_\alpha \cup T_\beta)^1 = ((T_\alpha \cup
T_\beta)^0)^c$ we have
\[ \mu( (T_\alpha \cup T_\beta)^1 ) >
\mu(T^1_\beta) + \varepsilon/3. \]

CLAIM 3. If $\alpha<\beta$ and $F=T_\alpha\setminus
T_\beta$, then for every nonempty finite $G\sub
T_\beta$ we have $\mu( (F\cup G)^1
)>\mu(G^1)+\varepsilon/3$.
\medskip

Using Claim 2 we have
\[ \mu(T^1_\beta)+\mu( F^1
) - \mu(T_\beta^1 \cap F^1) = \mu(T_\alpha^1 \cup T_\beta^1) = \mu( (T_\alpha \cup T_\beta)^1 )
> \mu(T^1_\beta) + \varepsilon/3. \]
Thus
\[ \mu(F^1) - \mu(T^1_\beta \cap F^1) >
\varepsilon/3\]
and for every finite $G\sub T_\beta$
\[ \mu(F^1) > \mu(G^1\cap F^1) + \varepsilon/3,
\ \ \ \mbox{ so } \ \ \
 \mu(F^1 \setminus G^1) > \varepsilon/3. \]
Finally 
\[ \mu( (F\cup G)^1 ) = \mu(F^1 \cup G^1) >
\mu(G^1)+\varepsilon/3, \]
and Claim 3 is proved.
\medskip

Now, let $G\sub T_0$ be a finite set such that
$\mu(G^1)>\rho(T_0)-\varepsilon/6$. Because of
property (*) there is $\beta<\om_1$ such that
$G\sub T_\beta$. Let $F=T_0\setminus T_\beta$.
Then, using Claim 3 we have
\[ \mu( (F\cup G)^1 ) > \mu(G^1) + \varepsilon/3 >
\rho(T_0) - \varepsilon/6 + \varepsilon >
\sup_\alpha \rho(T_\alpha) -2\varepsilon/6 +
\varepsilon/3 = \sup_\alpha \rho(T_\alpha). \]
But $F\cup G \sub T_0$, so $\mu( (F\cup G)^1 )\leq
\rho(T_0)$, a contradiction.
\end{proof}

Now we are finally ready to present a proof of
Theorem \ref{bellike}:

\begin{proof}
Since $K(\TTT)$ is separable, $\alg(\TTT)$ 
supports a strictly positive measure. Since $K(\TTT)$ does
not have a countable $\pi$--base, it cannot
support a uniformly regular measure. Finally, Theorem
\ref{bell} implies that all measures on
$\alg(\TTT)$ are separable.
\end{proof}

We finish this section with two remarks, not
connected directly to measures. First, we show that the 
fact that $\alg(\TTT)$ does not have an uncountable 
independent sequence can be deduced in a slightly simpler way.

\begin{thm}
The Boolean algebra $\alg(\TTT)$ is small.
\end{thm}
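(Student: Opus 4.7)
The plan is to derive smallness of $\alg(\TTT)$ directly from Theorem \ref{bell}, which says that every finitely additive measure on $\alg(\TTT)$ is separable. The argument will be by contradiction: assuming that $\alg(\TTT)$ contains a subalgebra $\mathfrak{F}$ isomorphic to ${\rm Free}(\om_1)$, I will produce a non--separable measure on $\alg(\TTT)$, contradicting Theorem \ref{bell}.

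On $\mathfrak{F}$, with independent generators $(x_\alpha)_{\alpha<\om_1}$, consider the canonical product measure $\nu$ defined by declaring the generators independent with $\nu(x_\alpha) = 1/2$ for every $\alpha$. A direct computation gives $\nu(x_\alpha \bigtriangleup x_\beta) = 1/2$ whenever $\alpha \neq \beta$, so the generators form an uncountable $1/2$--separated set in the Fr\'echet--Nikodym metric; in particular $(\mathfrak{F}, d_\nu)$ is non--separable.

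Next, extend $\nu$ to a finitely additive probability measure $\mu$ on the whole algebra $\alg(\TTT)$ using the classical extension theorem for finitely additive measures from a Boolean subalgebra, which is a routine application of Hahn--Banach and does not require strict positivity of $\nu$. Since $d_\mu$ restricted to $\mathfrak{F}$ coincides with $d_\nu$, the metric space $(\alg(\TTT), d_\mu)$ contains the same uncountable $1/2$--separated family $(x_\alpha)_{\alpha<\om_1}$, so $\mu$ is non--separable. This contradicts Theorem \ref{bell} and completes the proof.

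The only step that is not a one--liner is the invocation of the extension theorem for finitely additive measures, and even that is entirely standard. No further combinatorial analysis of the tower $(T_\alpha)_{\alpha<\om_1}$ is required here, as all of that work has already been absorbed into the proof of Theorem \ref{bell}; this is what makes the deduction ``slightly simpler'' than a direct proof that $\alg(\TTT)$ contains no uncountable independent sequence.
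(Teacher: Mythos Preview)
Your argument is correct. What you have written is exactly the forward implication of Theorem~\ref{Fremlin-char} (``if $\alg$ carries only separable measures then $\alg$ is small'') specialised to $\alg(\TTT)$, with Theorem~\ref{bell} supplying the hypothesis. Indeed you could have compressed the whole thing into the single line ``immediate from Theorem~\ref{bell} together with Theorem~\ref{Fremlin-char}''; the construction of the product measure on ${\rm Free}(\om_1)$ and its extension is precisely how one proves that direction of Fremlin's result.

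The paper, however, takes a genuinely different route. It gives a direct topological argument, independent of Theorem~\ref{bell} and of any measure--theoretic input: one shows that every filter on $\alg(\TTT)$ can be extended to an ultrafilter by adjoining only countably many elements, hence every closed subset of $K(\TTT)$ contains a point of countable relative $\pi$--character, and then one invokes Shapirovsky's theorem to conclude that $\alg(\TTT)$ is small. The filter--extension step is a short combinatorial analysis of the tower structure: one first decides each $\{n\}^0$ versus $\{n\}^1$, then finds the least level $\gamma$ at which some $S^1$ can be adjoined, and finally checks that after handling the countably many $T$ with $\mathrm{lv}(T)\le\gamma$ no further $T^0$ can be added.

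What each approach buys: yours is much shorter but is parasitic on the substantial work already done in Theorem~\ref{bell}; if one removed Theorem~\ref{bell} from the paper your proof would collapse. The paper's proof stands on its own and yields the stronger structural statement that every closed subspace of $K(\TTT)$ has a point of countable $\pi$--character; this is strictly more than smallness. One small remark on your write--up: the phrase ``slightly simpler'' in the paper refers to the paper's own Shapirovsky argument being simpler than Bell's original treatment, not to the measure--theoretic deduction you give---so your closing sentence inverts the intended comparison.
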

\begin{proof}
We will
prove that every filter on $\alg(\TTT)$ can be extended to an
ultrafilter by countably many sets. 
This would imply that every closed subset of
$K(\TTT)$ contains a point with a
relatively countable $\pi$--character and, 
because of Shapirovsky's theorem (see 
\cite[Theorem 6.1]{Todorcevic}) that $\alg(\TTT)$ is small. 

First, observe that if 
$\AAA \sub P(\om)$,
$\FF$ is a filter on $\alg(\AAA)$ and there is
no $A\in \AAA$ such that $\FF$ can be extended by
$A^0$, then $\FF$ is an ultrafilter. Of course, the same holds true for the sets of the form $A^1$.

Let $\FF$ be a filter on $\alg(\TTT)$.
Notice that without loss of generality we
can assume that $\{n\}^0\in \FF$ or $\{n\}^1\in
\FF$ for every $n\in \om$, extending $\FF$ by
at most countably many sets, if necessary. 

For $T\in \TTT$ say that the level of $T$ is
$\alpha$  if $T=^*T_\alpha$ (denote it by $\mathrm{lv}(T)=\alpha$). 
Assume $\FF$ is not an ultrafilter
and let $\gamma$ be the minimal number
such that there is $S$ of level $\gamma$ such that
$\FF$ can be extended by $S^1$. Notice that since $S^1\notin
\FF$ the set $\{n\}^0\in \FF$ for every $n\in S$. 

Extend $\FF$ to $\FF'$ by $S^1$. Then extend
$\FF'$ to $\FF''$ by countably many sets in such a
way that $\FF''$ cannot be extended by any element
of the set 
\[ \{T^0\colon \mathrm{lv}(T)\leq \gamma\}. \]
It can be done since the above set is countable.

We will show that $\FF''$ is an ultrafilter by
showing that it cannot be extended by a set $T^0$
for $T\in \TTT$. Indeed, let $T\in \TTT$. If
$\mathrm{lv}(T)\leq \gamma$, then either $T\in \FF''$ or
$\FF''$ cannot be extended  by $T$. If
$\mathrm{lv}(T)>\gamma$, then the set $S\setminus T$ is
finite. Moreover $(S\setminus T)^0 \in \FF$. 
So, the set $T^0\cap (S\setminus T)^0 \supseteq S^0$
cannot be added to $\FF''$, and thus $\FF''$ cannot
be extended by $T^0$. 
\end{proof}

The following theorem is also worth
mentioning in this context. A Boolean algebra is
minimally generated if there is a maximal chain in
the lattice of its subalgebras and this chain is
well--ordered (for a more intuitive but longer
definition, see e.g. \cite[Section 0]{Koppelberg}). The class of
minimally generated Boolean algebras is quite a big
sub--class of small Boolean algebras. In
\cite{Koppelberg} Koppelberg posed even the
question if those classes are equal (she answered
it negatively in \cite[Example 1]{Koppelberg2}). Minimally generated Boolean algebras are
interesting from the point of view of measure theory because they carry only separable measures.
Moreover, the following theorem holds.

\begin{thm} {\em (\cite[Theorem 4.15]{Pbn})}\label{second}
Every minimally generated Kelley algebra
supports a uniformly regular measure.
\end{thm}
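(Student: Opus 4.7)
The plan is to reduce the problem to Theorem \ref{characterization} by exhibiting the (atomless part of the) algebra as a subalgebra of the Jordan algebra $\JJ$ containing a dense Cantor subalgebra. We may assume $\alg$ is atomless since the atomic part is handled by counting measures; combining the two gives a uniformly regular measure on all of $\alg$.

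Write $\alg = \bigcup_{\alpha<\kappa}\alg_\alpha$ as a continuous chain witnessing minimal generation, where $\alg_{\alpha+1} = \alg_\alpha[x_{\alpha+1}]$ is a minimal extension. Choose $\alg_0 \subseteq \alg$ to be a countable atomless subalgebra that is dense in $\alg$, which can be arranged using the chain structure together with the atomlessness of $\alg$. In a minimal extension, $x_{\alpha+1}$ is uniquely determined by its trace on $\alg_\alpha$, namely by the ideal
\[ I_{\alpha+1} = \{a \in \alg_\alpha : a \leq x_{\alpha+1}\} \]
and the dual filter $F_{\alpha+1}$; the consistent extensions to $\alg_{\alpha+1}$ of a measure $\mu_\alpha$ on $\alg_\alpha$ are parametrised by values in $[\sup_{a\in I_{\alpha+1}} \mu_\alpha(a),\, \inf_{a\in F_{\alpha+1}} \mu_\alpha(a)]$. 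The strategy is to find a strictly positive measure $\mu_0$ on $\alg_0$ such that for every $\alpha$ the inner and outer $\mu_0$-measures of $x_{\alpha+1}$ coincide through $\alg_\alpha$ (and hence through $\alg_0$), so that the above interval degenerates and pins down a unique canonical extension $\mu$ to $\alg$. Equivalently, we want every $x_{\alpha+1}$ to lie in the Jordan extension $\JJ_{\mu_0}(\alg_0)$, so that the entire chain, and therefore $\alg$ itself, embeds in $\JJ_{\mu_0} \cong \JJ$.

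The main obstacle is arranging the Jordan coincidence along the entire transfinite chain while keeping the induced measure strictly positive. This is the step where the Kelley property is essential: decompose $\alg\setminus\{\mathbf{0}\} = \bigcup_n \FF_n$ into families of positive Kelley intersection number, and run a compactness argument in the weak-$*$ compact convex space of finitely additive probability measures on $\alg_0$. One takes an accumulation point of measures that approximately satisfy the Jordan coincidence for longer and longer initial segments of the chain, while remaining bounded below on each $\FF_n$ at the scale determined by its Kelley intersection number; the Kelley condition guarantees that this limit set is nonempty and that the limiting $\mu_0$ stays strictly positive when pulled back along the canonical extensions. Modulo this compactness/Kelley step (which is the real work), the induced measure $\mu$ sits on the subalgebra $\alg \subseteq \JJ_{\mu_0}(\alg_0) \cong \JJ$ with dense Cantor subalgebra $\alg_0$, and Theorem \ref{characterization} delivers the conclusion that $\mu$ is uniformly regular.
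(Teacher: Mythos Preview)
The paper does not actually prove this theorem; it is quoted verbatim from \cite[Theorem 4.15]{Pbn} and used only to deduce that the algebra $\alg(\TTT)$ constructed in Section~\ref{separable} is not minimally generated. So there is no in-paper proof to compare your argument against.

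That said, your proposal is not a proof but a sketch with two substantial gaps, both of which you flag yourself. First, the sentence ``Choose $\alg_0 \subseteq \alg$ to be a countable atomless subalgebra that is dense in $\alg$, which can be arranged using the chain structure together with the atomlessness of $\alg$'' is doing a lot of unearned work. Nothing in the definition of minimal generation hands you a countable dense subalgebra for free; indeed, producing a countable \emph{uniformly} dense subalgebra is essentially the content of the theorem (via Theorem~\ref{characterization}), so you are very close to assuming what is to be proved. You need an honest argument here, presumably using the well-ordered chain in an essential way, and it is not clear how the chain alone gives density without already having some measure-theoretic input.

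Second, the compactness step is, as you concede, ``the real work'', and the outline you give is not convincing as it stands. For a fixed element $x$ of $\sigma(\alg_0)$ the condition $\mu_*(x)=\mu^*(x)$ is only $G_\delta$ in the weak-$*$ topology on probability measures on $\alg_0$ (the inner measure is lower semicontinuous, the outer measure upper semicontinuous), not closed; an accumulation point of measures with \emph{approximate} Jordan coincidence need not satisfy exact coincidence. Worse, you need this simultaneously for the possibly uncountable family $\{x_{\alpha+1}:\alpha<\kappa\}$, so you are intersecting uncountably many non-closed sets and compactness by itself does not guarantee a common point. Finally, the Kelley decomposition bounds the \emph{value} of some measure on each $\FF_n$ from below, but it does not obviously interact with the Jordan constraints in the way you suggest. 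Until these points are addressed, the argument remains a plausible heuristic rather than a proof.
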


It follows that the Boolean algebra constructed
above is an another example of a small Boolean
algebra which is not minimally generated.

\bibliographystyle{amsplain}
\bibliography{class}

\end{document}